\theoremstyle{plain}
\newtheorem{Thm}{Theorem}[section]
\newtheorem{Lem}[Thm]{Lemma}
\newtheorem{Cor}[Thm]{Corollary}
\newtheorem{Pro}[Thm]{Proposition}
\newtheorem{Prp}[Thm]{Properties}
\theoremstyle{definition}
\newtheorem{Def}[Thm]{Definition}
\newtheorem{Exm}[Thm]{Example}
\newtheorem{Prb}[Thm]{Problem}
\theoremstyle{remark}
\newtheorem{Rem}[Thm]{Remark}
\newcommand{\myEmail}{piotr.niemiec@uj.edu.pl}
\newcommand{\myAddress}[1]{\noindent{}\ITE{\equal{#1}{}}{}{Piotr Niemiec\\{}}
   In\-sty\-tut Ma\-te\-ma\-ty\-ki\\{}Wy\-dzia\l{} Ma\-te\-ma\-ty\-ki i In\-for\-ma\-ty\-ki\\{}
   U\-ni\-wer\-sy\-tet Ja\-giel\-lo\'{n}\-ski\\{}ul. \L{}o\-ja\-sie\-wi\-cza 6\\{}
   30-348 Kra\-k\'{o}w\\{}Poland}
\newcommand{\myData}[1][Piotr Niemiec]{\author[P. Niemiec]{Piotr Niemiec}\address{\myAddress{#1}}
   \email{\myEmail}}
\newcommand{\CCC}{\mathbb{C}}
\newcommand{\RRR}{\mathbb{R}}
\newcommand{\ZZZ}{\mathbb{Z}}
\newcommand{\BBb}{\CMcal{B}}
\newcommand{\HHh}{\CMcal{H}}
\newcommand{\Bb}{\mathfrak{B}}
\newcommand{\bbB}{\mathscr{B}}
\newcommand{\ddD}{\mathscr{D}}\newcommand{\eeE}{\mathscr{E}}
\newcommand{\mmM}{\mathscr{M}}
\newcommand{\ssS}{\mathscr{S}}
\newcommand{\zzZ}{\mathscr{Z}}
\newcommand{\SECT}[1]{\section{#1}\renewcommand{\theequation}{\arabic{section}-\arabic{equation}}
   \setcounter{equation}{0}}
\newcommand{\ITE}[3]{\ifthenelse{#1}{#2}{#3}}\newcommand{\ITEE}[3]{\ITE{\equal{#1}{#2}}{#3}{}}
\newcommand{\card}{\operatorname{card}}
\newcommand{\Diag}{\operatorname{Diag}}
\newcommand{\IM}{\operatorname{Im}}
\newcommand{\RE}{\operatorname{Re}}
\newcommand{\tr}{\operatorname{tr}}
\newcommand{\leqsl}{\leqslant}\newcommand{\geqsl}{\geqslant}
\newcommand{\epsi}{\varepsilon}\newcommand{\varempty}{\varnothing}\newcommand{\dd}{\colon}
\newcommand{\dint}[1]{\,\textup{d} #1}
\newcommand{\TFCAE}{The following conditions are equivalent:}
\newcommand{\tfcae}{the following conditions are equivalent:}
\newcommand{\COR}[1]{Corollary~\textup{\ref{cor:#1}}}
\newcommand{\DEF}[1]{Definition~\textup{\ref{def:#1}}}
\newcommand{\EXM}[1]{Example~\textup{\ref{exm:#1}}}
\newcommand{\LEM}[1]{Lemma~\textup{\ref{lem:#1}}}
\newcommand{\PRB}[1]{Problem~\textup{\ref{prb:#1}}}
\newcommand{\PRO}[1]{Proposition~\textup{\ref{pro:#1}}}
\newcommand{\THM}[1]{Theorem~\textup{\ref{thm:#1}}}
\newenvironment{cor}[1]{\begin{Cor}\label{cor:#1}}{\end{Cor}}
\newenvironment{dfn}[1]{\begin{Def}\label{def:#1}}{\end{Def}}
\newenvironment{exm}[1]{\begin{Exm}\label{exm:#1}}{\end{Exm}}
\newenvironment{lem}[1]{\begin{Lem}\label{lem:#1}}{\end{Lem}}
\newenvironment{prb}[1]{\begin{Prb}\label{prb:#1}}{\end{Prb}}
\newenvironment{pro}[1]{\begin{Pro}\label{pro:#1}}{\end{Pro}}
\newenvironment{rem}[1]{\begin{Rem}\label{rem:#1}}{\end{Rem}}
\newenvironment{thm}[1]{\begin{Thm}\label{thm:#1}}{\end{Thm}}
\newcommand{\op}{\textup{\textsf{op}}}\newcommand{\spc}{\operatorname{sp}} 
\newcommand{\DDB}{\textup{\textsf{DDB}}}\newcommand{\DDC}{\textup{\textsf{DDC}}} 
\newcommand{\TC}{\textup{\textsf{TC}}} 
\newcommand{\bibITEM}[2]{\ITE{\equal{#2}{}}{\bibitem{#1} }{\bibitem[#2]{#1} }}
\newcommand{\BIB}[8]{
   \bibITEM{#1}{#8} #2, \textit{#3}, #4{} \textbf{#5} (#6), #7.}
\newcommand{\myBIB}[7][P. Niemiec]{\ITE{\equal{#7}{*}\or\equal{#7}{**}}{}{#1, \textit{#2}, }
   #3{}\ITE{\equal{#4}{}}{}{ \textbf{#4}} (#5), #6\ITE{\equal{#7}{*}}{}{.}}
\newcommand{\BIb}[6]{
   \bibITEM{#1}{#6} #2, \textit{#3}, #4, #5.}
\newcommand{\jRN}[2][]{
   \ITEE{#2}{AdvM}{\ITE{\equal{#1}{+}}
      {Advances in Mathematics}{Adv. Math.}}
   \ITEE{#2}{AmJM}{\ITE{\equal{#1}{+}}
      {American Journal of Mathematics}{Amer. J. Math.}}
   \ITEE{#2}{BAMS}{\ITE{\equal{#1}{+}}
      {Bulletin of the American Mathematical Society}{Bull. Amer. Math. Soc.}}
   \ITEE{#2}{MZ}{\ITE{\equal{#1}{+}}
      {Math. Z.}{Math. Z.}}
   \ITEE{#2}{PJapAc}{\ITE{\equal{#1}{+}}
      {Proceedings of the Japan Academy}{Proc. Japan Acad.}}
   \ITEE{#2}{PNAS}{\ITE{\equal{#1}{+}}
      {Proceedings of the National Academy of Sciences of the United States of America}
      {Proc. Natl. Acad. Sci. USA}}
   \ITEE{#2}{SM}{\ITE{\equal{#1}{+}}
      {Studia Mathematica}{Studia Math.}}
   \ITEE{#2}{SurApprTh}{\ITE{\equal{#1}{+}}
      {Surveys in Approximation Theory}{Surv. Approx. Theory}}
   \ITEE{#2}{TAMS}{\ITE{\equal{#1}{+}}
      {Transactions of the American Mathematical Society}{Trans. Amer. Math. Soc.}}
   \ITEE{#2}{ZAngewM}{\ITE{\equal{#1}{+}}
      {Z. Angew. Math. Mech.}{Z. Angew. Math. Mech.}}
   }
\newcommand{\paplist}[3][]{
   \ITEE{#3}{ABAleksandrov,VVPeller,DSPotapov,FASukochev2011}{
      \BIB{#2}{A.B. Aleksandrov, V.V. Peller, D.S. Potapov and F.A. Sukochev}
         {Functions of normal operators under perturbations}
         {\jRN{AdvM}}{226}{2011}{5216--5251}{#1}}
   \ITEE{#3}{RBhatia1997}{
      \BIb{#2}{R. Bhatia}
         {Matrix Analysis}
         {Springer, New York}{1997}{#1}}
   \ITEE{#3}{FFBonsall,NJDuncan1973}{
      \BIb{#2}{F.F. Bonsall and N.J. Duncan}
         {Complete Normed Algebras}
         {Springer\hyp{}Verlag, Berlin}{1973}{#1}}
   \ITEE{#3}{CDeBoor2005}{
      \BIB{#2}{C. de Boor}
         {Divided differences}
         {\jRN{SurApprTh}}{1}{2005}{46--69}{#1}}
   \ITEE{#3}{JBConway1990}{
      \BIb{#2}{J.B. Conway}
         {A Course in Functional Analysis \textup{(Graduate Texts in Mathematics, vol. 96)}}
         {Springer, New York}{1990}{#1}}
   \ITEE{#3}{NDunford,JTSchwartz1971}{
      \BIb{#2}{N. Dunford and J.T. Schwartz}
         {Linear Operators, part III}
         {Wiley\hyp{}Interscience, New York}{1971}{#1}}
   \ITEE{#3}{BFuglede1950}{
      \BIB{#2}{B. Fuglede}
         {A commutativity theorem for normal operators}
         {\jRN{PNAS}}{36}{1950}{35--40}{#1}}
   \ITEE{#3}{TKato1973}{
      \BIB{#2}{T. Kato}
         {Continuity of the map $S \mapsto |S|$ for linear operators}
         {\jRN{PJapAc}}{49}{1973}{157--160}{#1}}
   \ITEE{#3}{KLowner1934}{
      \BIB{#2}{K. L\"{o}wner}
         {\"{U}ber monotone Matrixfunctionen}
         {\jRN{MZ}}{38}{1934}{177--216}{#1}}
   \ITEE{#3}{pn2002}{\bibITEM{#2}{#1} \mypaplist{pn1}{}}
   \ITEE{#3}{GOpitz1964}{
      \BIB{#2}{G. Opitz}
         {Steigungsmatrizen}
         {\jRN{ZAngewM}}{44}{1964}{T52--T54}{#1}}
   \ITEE{#3}{CRPutnam1951}{
      \BIB{#2}{C.R. Putnam}
         {On normal operators in Hilbert space}
         {\jRN{AmJM}}{73}{1951}{357--362}{#1}}
   \ITEE{#3}{DSerre2002}{
      \BIb{#2}{D. Serre}
         {Matrices: Theory and Applications \textup{(Graudate Texts in Mathematics 216)}}
         {Springer\hyp{}Verlag, New York}{2002}{#1}}
   \ITEE{#3}{HWhitney1934}{
      \BIB{#2}{H. Whitney}
         {Derivatives, difference quotients and Taylor's formula}
         {\jRN{BAMS}}{40}{1934}{89--94}{#1}}
   \ITEE{#3}{HWhitney1934a}{
      \BIB{#2}{H. Whitney}
         {Differentiable functions defined in closed sets. I}
         {\jRN{TAMS}}{36}{1934}{369--387}{#1}}
   }
\newcommand{\mypaplist}[3][]{
   \ITEE{#2}{pn1}{
      \myBIB{Separate and joint similarity to families of normal operators}
         {\jRN{SM}}{149}{2002}{39--62}{#3}}
   }
\begin{document}

\title{Functional calculus for diagonalizable matrices}
\myData\thanks{The author gratefully acknowledges the assistance of the Polish Ministry 
   of Sciences and Higher Education grant NN201~546438 for the years 2010--2013.}
\begin{abstract}
For an arbitrary function $f\dd \Omega \to \CCC$ (where $\Omega \subset \CCC$) and a positive
integer $k$ let $f_{\op}\dd \ddD_k(\Omega) \ni X \mapsto f[X] \in \ddD_k(\CCC)$ where
$\ddD_k(\Omega)$ consists of all $k \times k$ matrices similar to diagonal whose all eigenvalues lie
in $\Omega$ be the function defined as follows: $f[P \Diag(\lambda_1,\ldots,\lambda_k) P^{-1}] =
P \Diag(f(\lambda_1),\ldots,f(\lambda_k)) P^{-1}$ for arbitrary $\lambda_1,\ldots,\lambda_k \in
\Omega$ and an invertible $k \times k$ matrix $P$. The aim of the paper is to fully answer
the question of when $f_{\op}$ is continuous for fixed $k$. In particular, it is shown that
if $\Omega$ is open in $\CCC$, then $f_{\op}$ is continuous for fixed $k \geqsl 3$ iff $f$ is
holomorphic; and if $\Omega$ is an interval in $\RRR$ and $k \geqsl 3$, then $f_{\op}$ is continuous
on $\ddD_k(\Omega)$ iff $f \in C^{k-2}(\Omega)$ and $f^{(k-2)}$ is locally Lipschitz in $\Omega$.
Also a full characterization is given when the domain of $f$ is arbitrary as well as when $f_{\op}$
acts on infinite-dimensional (diagonalizable) matrices.
\end{abstract}
\subjclass[2010]{Primary 47A60, 26E10; Secondary 47A56.}
\keywords{Diagonalizable matrix; functional calculus; divided difference; smooth function; 
   scalar operator.}
\maketitle


\SECT{Introduction}

Continuous (or Borel) functional calculus for normal operators is an interesting concept widely
investigated in operator theory. There are many spectacular results dealing with this concept, e.g.
Loewner's theorem \cite{low} on operator monotone functions (for other proof and a discussion see
Chapter~V in \cite{bha}); Aleksandrov's-Peller's-Potapov's-Sukochev's theorem \cite{ap+} on operator
H\"{o}lder functions which turn out to coincide with H\"{o}lder functions---this is in contrast
to Lipschitz functions which may not be operator Lipschitz (consult e.g. \cite{kat}). It is quite
natural to extend the above functional calculus to other operators (or matrices) than normal.
The simplest class applicable here is formed by diagonalizable matrices (that is, matrices similar
to diagonal). In this way for any function $f\dd \Omega \to \CCC$ and each positive integer $k$
we may properly define  a matrix-valued function $f_{\op}\dd \ddD_k(\Omega) \to \ddD_k(\CCC)$ where
$f_{\op}$, $\ddD_k(\Omega)$ and $\ddD_k(\CCC)$ are as in Abstract. Having such an extended
functional calculus, we may pose analogous questions as in case of normal matrices. Most basic among
them is the continuity of $f_{\op}$. More precisely, we may study the following issue:
\begin{prb}{!}
Given a set $\Omega \subset \CCC$ and a positive integer $k$, characterize all functions $f\dd\
\Omega \to \CCC$ for which the function $f_{\op}\dd \ddD_k(\Omega) \to \ddD_k(\CCC)$ is continuous.
\end{prb}

At first sight, one may suspect that the characterization is `trivial', that is, that the continuity
of $f$ is sufficient for the continuity of $f_{\op}$. Surprisingly, it turns out that this
supposition fails even for $k = 2$. The main aim of the paper is to give a full answer to \PRB{!}.
Our general characterization involves so-called divided differences (see Section~3). However,
in most of practical cases, i.e. when the set $\Omega$ is open in $\CCC$ or a subinterval of $\RRR$,
the criterion may simply be formulated. To this end, denote by $\mmM_k(\Omega)$ the set of all
$k \times k$ matrices whose all eigenvalues lie in $\Omega$. Additionally, put
\begin{equation}\label{eqn:singular}
\zzZ_k = \{X \in \mmM_k(\CCC)\dd\ (X - \lambda I_k)^{k-1} \neq 0 = (X - \lambda I_k)^k
\textup{ for some } \lambda \in \CCC\}
\end{equation}
(where $I_k$ stands for the $k \times k$ unit matrix). Two of our main results read as follows:

\begin{thm}{main}
Let $k \geqsl 3$ be fixed.
\begin{enumerate}[\upshape(A)]
\item Let $\Omega$ be an open set in $\CCC$ and $f\dd \Omega \to \CCC$ be any function. \TFCAE
   \begin{enumerate}[\upshape(i)]
   \item $f_{\op}\dd \ddD_k(\Omega) \to \ddD_k(\CCC)$ is continuous;
   \item $f_{\op}\dd \ddD_2(\Omega) \to \ddD_2(\CCC)$ extends to a continuous function
      of $\mmM_2(\Omega)$ into $\mmM_2(\CCC)$;
   \item $f$ is holomorphic.
   \end{enumerate}
\item Let $\Omega$ be a subinterval of $\RRR$ or an open set in $\RRR$, and let $f\dd \Omega \to
   \CCC$ be arbitrary. \TFCAE
   \begin{enumerate}[\upshape(i)]
   \item $f_{\op}\dd \ddD_k(\Omega) \to \ddD_k(\CCC)$ is continuous;
   \item $f_{\op}\dd \ddD_k(\Omega) \to \ddD_k(\CCC)$ extends to a continuous function
      of $\mmM_k(\Omega) \setminus \zzZ_k$ into $\mmM_k(\CCC)$;
   \item $f$ is of class $C^{k-2}$ and $f^{(k-2)}$ is locally Lipschitz.
   \end{enumerate}
   Moreover, $f_{\op}$ extends to a continuous function of $\mmM_k(\Omega)$ into $\mmM_k(\CCC)$ iff
   $f$ is of class $C^{k-1}$. In particular, $f_{\op}\dd \ddD_n(\Omega) \to \ddD_n(\CCC)$
   is continuous for each $n \geqsl 1$ iff $f$ is of class $C^{\infty}$, and then $f_{\op}$ extends
   to a continuous function of $\mmM_n(\Omega)$ to $\mmM_n(\CCC)$ for all $n$.
\end{enumerate}
\end{thm}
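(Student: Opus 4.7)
The plan is to invoke the divided-differences framework to be established in Section~3, which expresses $f[X]$ for $X \in \ddD_k(\Omega)$ as a polynomial in $X$ whose coefficients are divided differences $f[\mu_1,\ldots,\mu_j]$ of its eigenvalues, and in terms of which the continuity of $f_{\op}$ on $\ddD_k(\Omega)$ becomes equivalent to joint continuity of all divided differences $f[\mu_1,\ldots,\mu_j]$ on $\Omega^j$ for $j \leqsl k$. Granted that equivalence, both parts reduce to classical regularity theorems from complex and real analysis respectively.

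For part (A), the implications out of (iii) follow from the Riesz--Dunford holomorphic functional calculus, $f(X) = \frac{1}{2\pi i}\oint_\Gamma f(z)(zI-X)^{-1}\dint{z}$, which for holomorphic $f$ produces a continuous map $\mmM_n(\Omega) \to \mmM_n(\CCC)$ agreeing with $f_{\op}$ on $\ddD_n(\Omega)$ for every $n$; this yields both (iii)$\Rightarrow$(i) and (iii)$\Rightarrow$(ii). For (i)$\Rightarrow$(iii), since $k \geqsl 3$, continuity of $f_{\op}$ on $\ddD_k(\Omega)$ forces joint continuity of the first divided difference $f[\mu_1,\mu_2]$ on $\Omega^2$; taking $\mu_2 = \mu_1 + h$ and letting $h \to 0$ shows $f$ is $\CCC$-differentiable throughout $\Omega$, hence holomorphic by Goursat. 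For (ii)$\Rightarrow$(iii), I would approach a Jordan block $J_\lambda$ at $\lambda \in \Omega$ along a family $X_\epsi = J_\lambda + \epsi Y$ for varying $Y$; an explicit eigendecomposition shows that the off-diagonal entry of $f_{\op}(X_\epsi)$ has the form $[f(\lambda + \alpha\epsi) - f(\lambda + \beta\epsi)]/(\alpha - \beta)\epsi$ for parameters $\alpha, \beta$ determined by $Y$, and consistency of the $\epsi \to 0$ limit as $Y$ ranges forces $\CCC$-differentiability of $f$ at $\lambda$.

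For part (B), the inclusion $\ddD_k(\Omega) \subset \mmM_k(\Omega) \setminus \zzZ_k$ makes (ii)$\Rightarrow$(i) immediate. For (iii)$\Rightarrow$(ii) I would invoke the Hermite--Genocchi representation
\begin{equation*}
f[\mu_1,\ldots,\mu_k] = \int_{\Delta_{k-1}} f^{(k-1)}\Bigl(\sum_{i=1}^{k} t_i \mu_i\Bigr) \dint{t},
\end{equation*}
which is valid in the a.e.\ sense when $f^{(k-2)}$ is locally Lipschitz (so that $f^{(k-1)}$ is locally essentially bounded) and which by dominated convergence is continuous in $(\mu_1,\ldots,\mu_k)$ on every compact subset of $\Omega^k$ not meeting the full diagonal; by the Section~3 framework this pulls back to continuity of $f_{\op}$ on $\mmM_k(\Omega) \setminus \zzZ_k$. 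The main direction (i)$\Rightarrow$(iii) uses Section~3 to translate continuity of $f_{\op}$ on $\ddD_k$ into local boundedness of $f[\mu_1,\ldots,\mu_k]$ on $\Omega^k$ away from the full diagonal, and a classical divided-difference regularity theorem of de~Boor/Whitney type identifies this property with $f \in C^{k-2}(\Omega)$ having locally Lipschitz $(k-2)$-th derivative. The ``moreover'' clause adds the Jordan block $J_k(\lambda) \in \zzZ_k$: continuity of $f_{\op}$ there requires the fully degenerate divided difference $f[\lambda,\ldots,\lambda] = f^{(k-1)}(\lambda)/(k-1)!$ to be continuous in $\lambda$, i.e., $f \in C^{k-1}(\Omega)$; the final $C^\infty$ characterization then comes by letting $n \to \infty$.

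The principal obstacle throughout is the reduction to divided differences from Section~3, on which the present theorem crucially rests. The difficulty lies in the fact that as the eigenvalues of $X = P\Diag(\lambda_1,\ldots,\lambda_k)P^{-1}$ coalesce the conjugator $P$ degenerates, so the individual values $f(\lambda_i)$ cannot separately appear in the entries of $f_{\op}(X)$; only their divided differences---symmetric under permutation and admitting finite limits under coalescence precisely when $f$ has sufficient regularity---survive. Pinning down which combinations appear, with what multiplicity, and in which entries of $f_{\op}(X)$ is the combinatorial core of the argument, after which the regularity conclusions (holomorphy in part~(A), $C^{k-2}$ with Lipschitz $(k-2)$-th derivative in part~(B)) follow from Goursat's theorem and the de~Boor/Whitney characterization respectively.
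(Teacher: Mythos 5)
Your overall route---reduce everything to scalar divided differences of the eigenvalues and then invoke holomorphic calculus and Whitney/de Boor regularity---is the same route the paper takes, but the pivotal equivalence you state at the outset is not the correct one, and the difference is exactly what the theorem turns on. Continuity of $f_{\op}$ on $\ddD_k(\Omega)$ is \emph{not} equivalent to joint continuity of the divided differences $\Delta(\mu_1,\ldots,\mu_j)f$ on $\Omega^j$ for all $j \leqsl k$ (i.e.\ up to and including coalescence of the arguments): that stronger condition is the paper's class $\DDC^{k-1}$ and, by \PRO{strong-cont}, it characterizes the continuous extension of $f_{\op}$ to \emph{all} of $\mmM_k(\Omega)$, which in case (B) is the ``moreover'' clause $f \in C^{k-1}$. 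Plain continuity on $\ddD_k(\Omega)$ is equivalent only to \emph{local boundedness} of the $k$-point divided difference near diagonal points (the class $\DDB^{k-1}$ of \THM{gen-cont}), which in case (B) is $C^{k-2}$ with $f^{(k-2)}$ locally Lipschitz. You do switch to the boundedness condition later in (B)(i)$\Rightarrow$(iii), but if your opening reduction were taken literally it would produce $C^{k-1}$ in (B)(i) and contradict the very statement being proved; likewise in (A), conditions (i) and (ii) are deliberately of these two different strengths, and your one-line claim that continuity on $\ddD_k$ ``forces joint continuity of $f[\mu_1,\mu_2]$ on $\Omega^2$'' already needs the bootstrapping from boundedness of the top-order difference to Lipschitz continuity of the lower-order ones (the induction of \PRO{B-C}), which you do not indicate.

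Beyond that mis-statement, the genuine gap is that the transfer between the divided-difference conditions and the behaviour of the matrix map---in both directions---is only asserted (``by the Section~3 framework'', ``pulls back''), while it is the bulk of the proof and is not contained in the divided-difference section alone. Passing from boundedness/continuity of $\Delta(\mu_1,\ldots,\mu_k)f$ to continuity of $f_{\op}$ on $\ddD_k(\Omega)$, to its continuous extension across $\mmM_k(\Omega)\setminus\zzZ_k$, and (for the ``moreover'' clause) across $\zzZ_k$, requires the locally Lipschitz symmetric extensions of all lower-order divided differences, the Newton-form polynomial $V_n$ evaluated at $X_n$ with a limit analysis as eigenvalues partially coalesce (subsequences, minimal-polynomial bookkeeping, the case distinctions in the proofs of \THM{gen-cont} and \PRO{strong-cont}), and a density argument to handle non-diagonalizable limits. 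In particular, the sufficiency half of the ``moreover'' clause ($f \in C^{k-1}$ implies $f_{\op}$ extends continuously through the Jordan blocks in $\zzZ_k$) is nowhere addressed in your plan; your treatment of that clause only extracts necessity at $J_k(\lambda)$, and even there the existence of $f^{(k-1)}$ must come from a Whitney-type converse (existence of limits of divided differences under total coalescence), not be presupposed by writing $f[\lambda,\ldots,\lambda]=f^{(k-1)}(\lambda)/(k-1)!$. Your Hermite--Genocchi/B-spline argument for (B)(iii) and the $2\times2$ Jordan-perturbation computation for (A)(ii)$\Rightarrow$(iii) are sound as far as they go (the latter is essentially the paper's test-matrix argument), but without the matrix-side equivalences the proposal does not yet prove the theorem.
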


\begin{thm}{holo}
Let $\Omega \subset \CCC$ and $f\dd \Omega \to \CCC$ be arbitrary. \TFCAE
\begin{enumerate}[\upshape(i)]
\item for any $\lambda \in \Omega$ there are positive real constants $M = M(\lambda)$ and $\epsi =
   \epsi(\lambda)$ such that $\|f[X]\| \leqsl M$ whenever $\|X - \lambda I_n\| \leqsl \epsi$,
   $X \in \ddD_n(\Omega)$ and each $n$;
\item for any $\lambda \in \Omega$ and each $\epsi > 0$ there exists $\delta > 0$ such that $\|f[X]
   - f[\lambda I_n]\| \leqsl \epsi$ whenever $\|X - \lambda I_n\| \leqsl \delta$, $X \in
   \ddD_n(\Omega)$ and $n$ is arbitrary;
\item $f$ extends to a holomorphic function of an open superset of $\Omega$ into $\CCC$.
\end{enumerate}
\end{thm}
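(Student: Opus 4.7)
The implications $(iii) \Rightarrow (ii) \Rightarrow (i)$ are the routine direction. For $(iii) \Rightarrow (ii)$, take a holomorphic extension $\tilde f$ of $f$ to an open neighborhood $U$ of $\Omega$, and for $\lambda \in \Omega$ pick $\delta > 0$ with $\overline{B(\lambda,2\delta)} \subset U$. For any $X \in \ddD_n(\Omega)$ with $\|X - \lambda I_n\| \leqsl \delta$ the spectrum of $X$ lies in $\overline{B(\lambda,\delta)}$, so the Dunford--Riesz integral
\[
f[X] \;=\; \tilde f(X) \;=\; \frac{1}{2\pi i}\oint_{|z-\lambda|=2\delta} \tilde f(z)\,(zI_n - X)^{-1}\,dz,
\]
combined with the Neumann-series bound $\|(zI_n - X)^{-1}\| \leqsl \delta^{-1}$ on the contour and the resolvent identity, would yield $\|f[X] - f(\lambda)I_n\| \leqsl \delta^{-1}\sup_{|z-\lambda|=2\delta}|\tilde f(z)| \cdot \|X - \lambda I_n\|$ with all constants independent of $n$; (ii) follows, and (i) is then an immediate triangle inequality.

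The content is in $(i) \Rightarrow (iii)$. Fix $\lambda \in \Omega$ with constants $M, \epsi$ from (i). For any $r \in (0,\epsi)$ and distinct $\mu_1,\dots,\mu_n \in \Omega \cap B(\lambda,r)$, I would set $\delta := \epsi - r$ and consider the upper bidiagonal matrix
\[
T \;:=\; \Diag(\mu_1,\dots,\mu_n) + \delta\,N_n,
\]
where $N_n$ is the $n \times n$ nilpotent shift. Then $T$ has distinct eigenvalues $\mu_1,\dots,\mu_n$, so $T \in \ddD_n(\Omega)$, and $\|T - \lambda I_n\| \leqsl r+\delta = \epsi$. Conjugating by $\Diag(1,\delta,\dots,\delta^{n-1})$ reduces $T$ to $\Diag(\mu_1,\dots,\mu_n) + N_n$, whose functional calculus---by Opitz's formula---is the upper-triangular matrix $[f[\mu_i,\dots,\mu_j]]_{i\leqsl j}$ of divided differences. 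Tracking the similarity yields $f[T]_{1,n} = \delta^{n-1}\,f[\mu_1,\dots,\mu_n]$, and combining $\|f[T]\| \geqsl |f[T]_{1,n}|$ with (i) produces the dimension-free estimate
\[
|f[\mu_1,\dots,\mu_n]| \;\leqsl\; \frac{M}{(\epsi - r)^{n-1}}.
\]

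To convert this into local holomorphy, isolated points of $\Omega$ would be handled by extending $f$ by the constant $f(\lambda)$ on a small disk meeting $\Omega$ only at $\lambda$. For a non-isolated $\lambda$, pick distinct points $\lambda_0,\lambda_1,\dots \in \Omega \setminus \{\lambda\}$ with $\lambda_k \to \lambda$ and form the Newton series
\[
\tilde f_\lambda(z) \;:=\; \sum_{k=0}^{\infty} f[\lambda_0,\dots,\lambda_k] \prod_{j=0}^{k-1}(z - \lambda_j).
\]
With $s := \sup_k|\lambda_k - \lambda|$ taken small, the divided-difference bound with $r = s$ dominates the $k$-th term by $M\bigl((|z-\lambda|+s)/(\epsi-s)\bigr)^k$, giving uniform convergence on compact subsets of $B(\lambda,\epsi-2s)$ to a holomorphic function. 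Agreement with $f$ on $\Omega \cap B(\lambda, R)$ for some $R > 0$ would follow from an insertion trick: prefixing the interpolation sequence with any $\mu \in \Omega$ close to $\lambda$ yields a second Newton series whose limit is a holomorphic function equal to $f(\mu)$ at $\mu$ and agreeing with $\tilde f_\lambda$ on the accumulating set $\{\lambda_k\}$, so the identity theorem forces $\tilde f_\lambda(\mu) = f(\mu)$. Local extensions at different base points then glue on overlaps by the identity theorem applied along an accumulating sequence, and, together with disks around isolated points, assemble into a holomorphic extension on an open superset of $\Omega$.

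\textbf{Main obstacle.} The delicate step will be upgrading the divided-difference bound---which is a statement only about $n$-tuples of points of $\Omega$---to a bona fide holomorphic extension recovering $f$ on all of $\Omega$ near $\lambda$, not merely at the chosen interpolation nodes. The insertion trick together with the identity theorem should take care of both local faithfulness and global gluing, but the entire scheme depends on the uniformity of $M(\lambda)$ and $\epsi(\lambda)$ in the matrix size $n$ guaranteed by (i); this is what keeps the Newton radius of convergence dimension-independent and separates Theorem~\ref{thm:holo} from the finite-$k$ assertions of Theorem~\ref{thm:main}.
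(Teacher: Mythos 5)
Your outline of (iii)$\Rightarrow$(ii)$\Rightarrow$(i) is fine; the explicit Cauchy--integral estimate is just a quantitative version of what the paper dispatches by citing the continuity of the holomorphic functional calculus. In the hard direction your key estimate $|\Delta(\mu_1,\ldots,\mu_n)f|\leqsl M/(\epsi-r)^{n-1}$ is obtained by exactly the paper's device: a bidiagonal perturbation of $\Diag(\mu_1,\ldots,\mu_n)$ whose corner entry of $f[T]$ equals $\delta^{n-1}\Delta(\mu_1,\ldots,\mu_n)f$ (the Opitz formula \cite{opz}); only the corner and the normalization of the radii differ. Where you genuinely depart from the paper is the passage from this bound to local holomorphy: the paper first deduces $f\in\DDC^{\infty}(\Omega)$ from \THM{gen-cont} and \PRO{B-C}, and then uses the continuous extensions $F_n$ of \PRO{DDC} together with the Taylor-type identity of \PRO{DD-T} to show that the Taylor series of $f$ (with T-derivatives) represents $f$ on $\Omega$ near each cluster point; you instead sum a Newton series along one sequence of nodes tending to $\lambda$ and recover faithfulness on $\Omega$ near $\lambda$ by the insertion trick plus the identity theorem. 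That alternative is sound and more self-contained, since it bypasses the $\DDB/\DDC/\TC$ machinery entirely.

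The gap is in the final assembly, which you compress into one clause but which occupies the last third of the paper's proof. First, each $\tilde f_\lambda$ is holomorphic on roughly $B(\lambda,\epsi-2s)$ but is only shown (via the insertion trick, which needs the prefixed series to converge at $\mu$) to agree with $f$ on $\Omega$ intersected with a strictly smaller ball, roughly $B(\lambda,\epsi/3)$; so for two overlapping pieces to carry a common set of values accumulating \emph{inside} the overlap you must glue discs shrunk well inside the faithfulness radius, so that overlap of two discs forces one centre into the other's region of validity, where points of $\Omega$ accumulate (this is exactly why the paper takes $\delta_z<\tfrac12\epsi_z$). Without that bookkeeping the overlap may contain no points of $\Omega$ at all and the identity theorem has nothing to act on. Second, and more seriously, the constant discs at isolated points cannot simply be united with the holomorphic discs: a constant piece agrees with a nearby $\tilde f_\lambda$ at most at the single point and will in general disagree on the open overlap, and an isolated point of $\Omega$ may lie at zero distance from the union of the holomorphic discs, so shrinking its disc alone need not make it disjoint from them. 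The paper resolves this by excising the closed set $\overline{\bigcup_n B(\lambda_n,\rho_n/2)}$ (with $\rho_n<1/n$ and $B(\lambda_n,\rho_n)\cap\Omega=\{\lambda_n\}$) from the holomorphic region, defining the extension piecewise on pairwise disjoint open sets, and then checking by a separate sequence argument that the resulting open set still contains $\Omega$. Some such construction, together with that final covering check, has to be supplied; the rest of your sketch goes through.
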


\THM{holo} shall be applied to characterize those functions $f$ for which $f_{\op}$, as a function
acting on (infinite-dimensional) diagonalizable (or so-called scalar) operators, is continuous (see
\PRO{holo} below).\par
The concept of operator H\"{o}lder or operator Lipschitz functions may simply be adapted
to the context of functional calculus for diagonalizable matrices (while, in the opposite, operator
monotonicity makes no longer sense in this realm). Much weaker property in this direction is uniform
continuity. It turns out that for diagonalizable matrices the property of `matrix uniform
continuity' becomes trivial, as shown by

\begin{pro}{uniform}
Let $\Omega \subset \CCC$, $f\dd \Omega \to \CCC$ and $k \geqsl 3$ be arbitrary \textup{(}or $k = 2$
and $\Omega$ has a cluster point in $\CCC$\textup{)}. \TFCAE
\begin{enumerate}[\upshape(i)]
\item there are positive real constants $M$ and $\epsi$ such that $\|f[X] - f[Y]\| \leqsl M$
   whenever $X, Y \in \ddD_k(\Omega)$ are such that $\|X - Y\| \leqsl \epsi$;
\item $f_{\op}\dd \ddD_k(\Omega) \to \ddD_k(\CCC)$ is uniformly continuous;
\item $f$ is of the form $f(z) = az + b$ for some $a, b \in \CCC$.
\end{enumerate}
\end{pro}

\THM{holo} and the results on the functional calculus for bounded diagonalizable Hilbert space
operators presented in Section~6 assert that the classical holomorphic functional calculus for
single Banach algebra elements (see e.g. \cite[I.\S7]{b-d}) is as rich as possible even for
diagonalizable Hilbert space operators when we require its continuity.\par
The paper is organized as follows. In Section~2 we present the concept of T-differentiability
(in the complex sense) of functions defined on arbitrary subsets of the complex plane. The idea
is based on the Taylor expansion of holomorphic functions. For functions sufficiently many times
T-differentiable we define their (in a sense artificial) calculus for matrices, whose `naturalness'
will later be justified by its continuity in case of more `regular' functions. In the next section
we deal with the divided differences for arbitrary functions. We give there certain criteria for
the differentiability of a function by means of its divided differences. These results will find
applications in the next part, where we give a full characterization of those functions $f\dd \Omega
\to \CCC$ for which $f_{\op}\dd \ddD_k(\Omega) \to \ddD_k(\CCC)$ is continuous. This section
contains also the proof of \THM{main}. Section~5 is devoted to the aspects of uniform continuity
of $f_{\op}$ and contains the proofs of \THM{holo} and \PRO{uniform}, while the last part
(Section~6) discusses infinite-dimensional case.

\subsection*{Notation and terminology} Whenever $\lambda_1,\ldots,\lambda_n$ are arbitrary complex
numbers, $\Diag(\lambda_1,\ldots,\lambda_n)$ denotes the diagonal $n \times n$ matrix whose diagonal
entries are precisely $\lambda_1,\ldots,\lambda_n$. For a complex $n \times n$ matrix $A$, $\|A\|$
stands for the \textit{operator norm} of $A = [a_{jk}]$ induced by the standard inner product
on $\CCC^n$, that is:
$$\|A\| = \max\Bigl\{\bigl|\sum_{j,k=1}^n a_{jk} z_k \bar{w}_j\bigr|\dd\ z_1,w_1,\ldots,z_n,w_n \in
\CCC,\ \sum_{k=1}^n |z_k|^2 = \sum_{j=1}^n |w_j|^2 = 1\Bigr\}.$$
The matrix $A$ is \textit{diagonalizable} iff it is similar to a diagonal one, i.e. if $P A P^{-1}$
is diagonal for some invertible $n \times n$ (complex) matrix $P$. The set of all complex
eigenvalues of $A$ (the \textit{spectrum} of $A$) is denoted by $\spc(A)$. We denote by $\mu_A$
the \textit{minimal} polynomial for $A$; that is, $\mu_A$ is a monic polynomial of a minimal degree
such that $\mu_A[A] = 0$.\par
Everywhere in this paper $\Omega$ denotes a totally arbitrary nonempty subset of the complex plane
and $f$ is any function of $\Omega$ into $\CCC$; $\bar{\Omega}$ and $\Omega' (\subset \Omega)$
denote, respectively, the closure of $\Omega$ in $\CCC$ and the set of all cluster points
of $\Omega$, that is, $\Omega'$ consists of all $z \in \Omega$ such that $z$ belongs to the closure
of $\Omega \setminus \{z\}$. For simplicity, the notation $w\stackrel{\Omega}{\to}z$ will mean that
$w$ runs over $\Omega \setminus \{z\}$ and tends to $z$. A complex-valued function defined
on a subset of $\CCC^n$ is said to be \textit{locally Lipschitz} iff every point of its domain has
a relative neighbourhood (i.e. relatively open in the domain of the function) on which the function
is Lipschitz. Subintervals of the real line are assumed to be nondegenerate.\par
For any $k \geqsl 1$, let us denote by $\mmM_k(\Omega)$ and $\ddD_k(\Omega)$, respectively, the sets
of all complex $k \times k$ matrices $X$ with $\spc(X) \subset \Omega$ and all such diagonalizable
matrices. Observe that:
\begin{enumerate}[(M1)]
\item $\ddD_k(\Omega) = \{X \in \mmM_k(\Omega)\dd\ \textup{all roots of $\mu_X$ are simple}\}$.
\end{enumerate}
Further, we put:
\begin{enumerate}[(M1)]\addtocounter{enumi}{1}
\item $\mmM_k^o(\Omega) = \{X \in \mmM_k(\Omega)\dd\ \textup{all possible multiple roots of $\mu_X$
   belong to $\Omega'$}\}$.
\end{enumerate}

\SECT{Abstract concept of differentiability}

The idea of the Taylor expansion enables us to introduce the following

\begin{dfn}{diff}
Let $k \geqsl 1$. A function $f\dd \Omega \to \CCC$ is said to be \textit{$k$ times T-differentiable
at a point $a \in \Omega'$} (the prefix `T-' is to emphasize the role of the Taylor expansion) iff
there are complex numbers $u_0,\ldots,u_k$ and a function $\tau_a\dd \Omega \to \CCC$ such that
\begin{equation}\label{eqn:Taylor}
\begin{array}{l}
\bullet\ f(z) = \sum_{j=0}^k \frac{u_j}{j!} (z - a)^j + \tau_a(z) (z - a)^k \qquad (z \in \Omega),\\
\bullet\ \tau_a(a) = 0,\\
\bullet\ \tau_a \textup{ is continuous at } a.
\end{array}
\end{equation}
It is easy to observe that the numbers $u_0,\ldots,u_k$ and the function $\tau_a$ are uniquely
determined by \eqref{eqn:Taylor} and thus we may put $f^{(j)}(a) := u_j\ (j=0,\ldots,k)$ whenever
\eqref{eqn:Taylor} holds. (In particular, $f^{(0)}(a) = f(a)$.) We say $f$ is \textit{$k$ times
T-differentiable} iff it is so at each point of $\Omega'$. In that case we call the function
$f^{(j)}\dd \Omega' \to \CCC$ the \textit{$j$th T-derivative} of $f$. Finally, $f$ is said to be
\textit{of class $\TC^k$} if $f$ is $k$ times T-differentiable and the functions
$f',\ldots,f^{(k)}\dd \Omega' \to \CCC$ as well as
$$\tau\dd \Omega' \times \Omega \ni (a,x) \mapsto \tau_a(x) \in \CCC$$
(where $\tau_a$ is as in \eqref{eqn:Taylor}) are continuous. When $f$ is of class $\TC^k$ for each
$k$, we express this by writing that $f$ is \textit{of class $\TC^{\infty}$}. Additionally, we call
the function $f$ \textit{of class $\TC^0$} if it is continuous, and we identify $f^{(0)}$ with $f$
(so, the domain of $f^{(0)}$ coincides with $\Omega$, which may differ from $\Omega'$).
\end{dfn}

The reader should notice that if $f$ is T-differentiable, then $f$ is continuous and $f'(z) =
\lim_{w\stackrel{\Omega}{\to}z} \frac{f(w) - f(z)}{w - z}$ for each $z \in \Omega'$. In particular,
if $\Omega$ is open in $\CCC$, then $f$ is $T$-differentiable iff it is of class $\TC^{\infty}$, iff
it is holomorhic. Much more difficult in proving is the following result, due to Whitney \cite{wh1}.

\begin{pro}{whitney1}
If $\Omega \subset \RRR$ is a subinterval or an open subset of the real line, then for arbitrary $k
\in \{1,2,\ldots\}$ a function $f\dd \Omega \to \CCC$ is of class $\TC^k$ iff it is of class $C^k$.
\end{pro}
\begin{proof}
Sufficiency follows from Taylor's theorem, while necessity is a consequence
of \cite[Theorem~3]{wh1} (indeed, due to that result, a function is of class $C^k$ provided
$\tau_a(x)$ tends to $0$ when $x$ tends to $a$ and this convergence is uniform on compact sets).
\end{proof}

We have introduced T-differentiable functions in order to extend the classical (polynomial
or holomorphic) functional calculus for matrices as widely as possible. First of all note that
if $X$ is a square matrix with $\mu_X(z) = \prod_{j=1}^m (z - \lambda_j)^{p_j}$ (where $\lambda_j$'s
are different) and $P$ and $Q$ are two arbitrary (complex) polynomials such that $P^{(s)}(\lambda_j)
= Q^{(s)}(\lambda_j)$ for any $j \in \{1,\ldots,m\}$ and $s \in \{0,\ldots,p_j-1\}$, then $P[X] =
Q[X]$. This simple observation leads us to the following

\begin{dfn}{T-calc}
Let $f\dd \Omega \to \CCC$ be of class $\TC^k$ (where $k \in \{0,1,2,\ldots,\infty\}$). Let $X \in
\mmM_n^o(\Omega)$ (cf. (M2)) be a matrix such that $\mu_X(z) = \prod_{j=1}^m (z - \lambda_j)^{p_j}$
(where $\lambda_j$'s are different) and $p_j \leqsl k+1$ for each $j$. Let $P$ be a polynomial such
that for each $j \in \{1,\ldots,m\}$:
\begin{itemize}
\item $P^{(s)}(\lambda_j) = f^{(s)}(\lambda_j)$ for $s \in \{0,\ldots,p_j-1\}$ provided $\lambda_j
   \in \Omega'$;
\item $P(\lambda_j) = f(\lambda_j)$ provided $\lambda_j \notin \Omega'$.
\end{itemize}
We define the matrix $f[X]$ as $P[X]$. The note preceding the definition shows that $f[X]$ is well
defined---that is, it is independent of the choice of $P$ (recall that $p_j = 1$ if $\lambda_j
\notin \Omega'$, since $X \in \mmM_n^o(\Omega)$). In particular, in this way we obtain functions
\begin{equation}\label{eqn:T-op}
\begin{array}{l}
f_{\op}\dd \mmM_n^o(\Omega) \ni X \mapsto f[X] \in \mmM_n(\CCC) \qquad (0 < n < k+2),\\
f_{\op}\dd \mmM_n^o(\Omega) \setminus \zzZ_n \ni X \mapsto f[X] \in \mmM_n(\CCC) \qquad (n = k+2)
\end{array}
\end{equation}
(where $\zzZ_n$ is as in \eqref{eqn:singular}). We call the assignment $f \mapsto f_{\op}$
the \textit{extended functional calculus} for matrices.
\end{dfn}

Although the extended functional calculus strikely resembles holomorphic, there is no algebraic nor
`practical' justification (apart from the calculus for diagonalizable matrices) of the approach
introduced above. Since $\ddD_n(\Omega)$ is dense in $\mmM_n^o(\Omega)$ (to convince of that, use
e.g. Jordan's matrix decomposition theorem) and $f_{\op}$ is naturally defined on $\ddD_n(\Omega)$
(for totally arbitrary functions $f\dd \Omega \to \CCC$), thus the continuity of $f_{\op}$
(on $\mmM_n^o(\Omega)$) would be a sufficiently good justification. In the course of our research
on this issue, it turned out that if only $f_{\op}$ is continuous on $\ddD_n(\Omega)$ (where
$n > 1$), then $f$ is of class $\TC^{n-2}$ and $f_{\op}$ (defined as above) is automatically
continuous on $\mmM_n^o(\Omega) \setminus \zzZ_n$ as well as on $\mmM_{n-1}^o(\Omega)$. These two
results motivated us to introduce \DEF{T-calc}. However, it is worth noting that being of class
$\TC^{\infty}$ for a function $f$ is insufficient for $f_{\op}$ to be continuous on $\ddD_2(\Omega)$
in general (see \EXM{TC-dis} below). To hit the mark of the problem, a stronger notion
of differentiability is needed, which we now turn to.

\SECT{Divided differences}

We begin this part with recalling the concept of \textit{divided differences} and their basic
properties. For each $n \geqsl 1$ let $\Omega^{(n)}$ be the set of all vectors $(z_1,\ldots,z_n) \in
\Omega^n$ whose all coordinates are different. Further, let $\Omega^{[n]}$ stand for the set of all
vectors $(z_1,\ldots,z_n) \in \Omega^n$ satisfying the following condition: if $z_j = z_k$ for some
distinct $j$ and $k$, then $z_j \in \Omega'$. Notice that the closure of $\Omega^{(n)}$
(in $\CCC^n$) coincides with $\bar{\Omega}^{[n]}$. The \textit{divided difference} of a function
$f\dd \Omega \to \CCC$ at $(z_1,\ldots,z_n) \in \Omega^{(n)}$, denoted
by $\Delta(z_1,\ldots,z_n) f$, is defined by induction on $n \geqsl 1$ as follows:
\begin{itemize}
\item $\Delta(z) f = f(z)$ for any $z \in \Omega^{(1)} (= \Omega)$;
\item $\Delta(z_1,\ldots,z_n) f = [\Delta(z_2,\ldots,z_n)f - \Delta(z_1,\ldots,z_{n-1}) f] /
   (z_n - z_1)$ for $n > 1$.
\end{itemize}
Let us now list two most important for us properties of the divided differences:
\begin{enumerate}[(DD1)]
\item The divided differences are symmetric; that is, whenever $(z_1,\ldots,z_n) \in \Omega^{(n)}$
   and $\sigma$ is a permutation of $\{1,\ldots,n\}$, then
   $\Delta(z_{\sigma(1)},\ldots,z_{\sigma(n)}) f = \Delta(z_1,\ldots,z_n) f$.
\item For any $(z_1,\ldots,z_n) \in \Omega^{(n)}$ (where $n > 1$), the polynomial $W(z) := f(z_1) +
   \sum_{k=2}^n \Delta(z_1,\ldots,z_k) f \cdot \prod_{j=1}^{k-1} (z - z_j)$ satisfies the equations
   $W(z_j) = f(z_j)$ for $j = 1,\ldots,n$.
\end{enumerate}
For the proofs of the above facts and a more detailed discussion on divided differences, consult
\cite{cdb}.\par
Our interest are functions whose divided differences satisfy some additional conditions. To this
end, we introduce

\begin{dfn}{DD}
Let $k \in \{0,1,2,\ldots\}$. A function $f\dd \Omega \to \CCC$ is said to be \textit{of class
$\DDB^k$}, in symbols $f \in \DDB^k(\Omega)$ [`DD' and `B' are the first letters of `divided
differences' and `bounded'], if for any $z \in \Omega'$ there are positive real constants $M =
M(\lambda)$ and $\epsi = \epsi(\lambda)$ such that $|\Delta(z_1,\ldots,z_{k+1}) f| \leqsl M$
whenever $(z_1,\ldots,z_{k+1}) \in \Omega^{(k+1)}$ and $|z_j - z| \leqsl \epsi$. The function $f$ is
\textit{of class $\DDB^{\infty}$} ($f \in \DDB^{\infty}(\Omega)$) if it is of class $\DDB^k$ for any
$k$.\par
Similarly, $f$ is said to be \textit{of class $\DDC^k$}, in symbols $f \in \DDC^k(\Omega)$ [`C' is
the first letter of `continuous'], if the function $\Omega^{(k+1)} \ni (z_1,\ldots,z_{k+1}) \mapsto
\Delta(z_1,\ldots,z_{k+1}) f \in \CCC$ has (finite) limit at $(z,\ldots,z) \in \CCC^{k+1}$ for each
$z \in \Omega'$. Finally, $f$ is \textit{of class $\DDC^{\infty}$} ($f \in \DDC^{\infty}(\Omega)$)
if $f \in \DDC^k(\Omega)$ for each $k$.
\end{dfn}

It follows from the very definitions that $\DDC^k(\Omega) \subset \DDB^k(\Omega)$ for any $k$.
At first sight, it may seem that the classes $\DDB$ and $\DDC$ have not much more in common (for
example, $f \in \DDB^0(\Omega)$ iff $f$ is locally bounded, while $f \in \DDC^0(\Omega)$ iff $f$ is
continuous). Therefore the following result may be surprising.

\begin{pro}{B-C}
For any $n \in \{1,2,3,\ldots\}$, $\DDB^n(\Omega) \subset \DDC^{n-1}(\Omega)$. Moreover, if $f \in
\DDB^n(\Omega)$, then there exists a locally compact set $G$, $\Omega \subset G \subset
\bar{\Omega}$, such that for any $k \in \{1,\ldots,n\}$ the function $\Omega^{(k)} \ni
(z_1,\ldots,z_k) \mapsto \Delta(z_1,\ldots,z_k)f \in \CCC$ extends to a locally Lipschitz symmetric
function of $G^{[k]}$ into $\CCC$.\par
In particular, $\DDB^{\infty}(\Omega) = \DDC^{\infty}(\Omega)$.
\end{pro}
\begin{proof}
Clearly, it is enough to prove the claims of the first paragraph of the proposition. We shall do
this by induction on $n$. To simplify the argument, for each $n > 0$ put
\begin{equation}\label{eqn:diag}
\Theta_n = \{(z,\ldots,z)\dd\ z \in \CCC\} \subset \CCC^n.
\end{equation}
When $n = 1$, the assumption that $f \in \DDB^1(\Omega)$ means that $f$ is locally Lipschitz at each
point of $\Omega'$. Since the other points of $\Omega$ are isolated, we infer that $f$ is locally
Lipschitz. So, for each $z \in \Omega$ there are positive real constants $\epsi_z$ and $M_z$ such
that $|f(w) - f(w')| \leqsl M_z |w - w'|$ for any $w, w' \in \Omega \cap B(z,\epsi_z)$ where
\begin{equation}\label{eqn:ball}
B(z,\epsi_z) = \{w \in \CCC\dd\ |w - z| < \epsi_z\}.
\end{equation}
We infer from the completeness of $\CCC$ that the restriction of $f$ to $\Omega \cap B(z,\epsi_z)$
extends to a Lipschitz function $g_z\dd \bar{\Omega} \cap B(z,\epsi_z) \to \CCC$. It is readily seen
that the functions $g_z$'s ($z \in \Omega$) agree and hence their union is locally Lipschitz at each
point of its domain $G := \bar{\Omega} \cap \bigcup_{z\in\Omega} B(z,\epsi_z)$. It remains to note
that $G$ is locally compact as the intersection of an open and a closed set.\par
Now assume the assertion holds for $n - 1$ (where $n > 1$); take $f \in \DDB^n(\Omega)$ and fix $z
\in \Omega'$. Let $\epsi_z > 0$ and $M_z > 0$ be such that $|\Delta(z_1,\ldots,z_{n+1})f| \leqsl
M_z$ whenever $z_1,\ldots,z_{n+1} \in \Omega \cap B(z,\epsi_z)$ are different. This means that
\begin{equation}\label{eqn:aux1}
|\Delta(z_1,\ldots,z_n)f - \Delta(z_2,\ldots,z_{n+1})f| \leqsl M_z |z_1 - z_{n+1}|
\end{equation}
for $(z_1,\ldots,z_{n+1}) \in \Omega^{(n+1)} \cap [B(z,\epsi_z)]^{n+1}$. Let us now show that
\begin{equation}\label{eqn:loc-L}
|\Delta(z_1,\ldots,z_n)f - \Delta(w_1,\ldots,w_n)f| \leqsl M_z \sum_{j=1}^n |z_j - w_j|
\end{equation}
for any $(z_1,\ldots,z_n),(w_1,\ldots,w_n) \in \Omega^{(n)} \cap [B(z,\epsi_z)]^n$. To this end,
put $I := \{z_1,\ldots,z_n\} \cap \{w_1,\ldots,w_n\}$. Involving (DD1) and permuting both
the systems $(z_1,\ldots,z_n)$ and $(w_1,\ldots,w_n)$ by means of a common permutation, we may and
do assume that $I = \{z_j\dd\ j \leqsl k\}$ for some $k \leqsl n$ ($k$ may be equal to $0$). If $k =
n$, we are done thanks to (DD1). Hence, we may and do assume that $k < n$. For $s \in
\{1,\ldots,k\}$ denote by $\sigma(s)$ a unique index $j$ for which $w_j = z_s$. Now fix for a moment
$s \in \{k+1,\ldots,n\}$ and put $\nu_s(1) = s$. Further we make use of induction: assume $\nu_s(j)$
is already defined for some $j \geqsl 1$. If $w_{\nu_s(j)} \in I$, define $\nu_s(j+1)$ as a unique
index $j'$ for which $w_{\nu_s(j)} = z_{j'}$. Otherwise put $m(s) = j$, $\sigma(s) = \nu_s(j)$ and
finish the construction for $s$. In this way we obtain a sequence $z_s,w_{\nu_s(1)},\ldots,
w_{\nu_s(m(s))}$ such that $w_{\nu_s(m(s))} = w_{\sigma(s)} \notin \{z_1,\ldots,z_n\}$ and
\begin{itemize}
\item[($\bullet$)] $w_{\nu_s(j)} = z_{\nu_s(j+1)} \in I$ for $0 \leqsl j < m(s)$ with convention
   that $w_{\nu_s(0)} := z_s$.
\end{itemize}
The above construction shows also that
\begin{itemize}
\item[($\bullet\bullet$)] $\nu_s(j) \neq \nu_{s'}(j')$ provided $s, s' > k$, $0 < j \leqsl m(s)$,
   $0 < j' \leqsl m(s')$ and $(s,j) \neq (s',j')$;
\end{itemize}
and $\sigma\dd \{1,\ldots,n\} \to \{1,\ldots,n\}$ is a permutation. Now (DD1) yields that
\begin{multline*}
|\Delta(z_1,\ldots,z_n)f - \Delta(w_1,\ldots,w_n)f| = |\Delta(z_1,\ldots,z_n)f
- \Delta(w_{\sigma(1)},\ldots,w_{\sigma(n)})f| =\\= |\Delta(z_1,\ldots,z_k,z_{k+1},\ldots,z_n)f
- \Delta(z_1,\ldots,z_k,w_{\sigma(k+1)},\ldots,w_{\sigma(n)})f| \leqsl\\\leqsl
\sum_{s=k+1}^n |\Delta(z_1,\ldots,z_s,w_{\sigma(s+1)},\ldots,w_{\sigma(n)})f
- \Delta(z_1,\ldots,z_{s-1},w_{\sigma(s)},\ldots,w_{\sigma(n)})f|.
\end{multline*}
Thanks to \eqref{eqn:aux1} (and again (DD1)), we may continue the above estimations as follows
(see the convention in ($\bullet$)):
\begin{multline*}
|\Delta(z_1,\ldots,z_n)f - \Delta(w_1,\ldots,w_n)f| \leqsl M_z \sum_{s=k+1}^n |z_s - w_{\sigma(s)}|
=\\= M_z \sum_{s=k+1}^n |z_s - w_{\nu_s(m_s)}| \leqsl M_z \sum_{s=k+1}^n \sum_{j=1}^{m(s)}
|w_{\nu_s(j-1)} - w_{\nu_s(j)}| \leqsl\\\leqsl M_z \sum_{s=k+1}^n \sum_{j=1}^{m(s)} |z_{\nu_s(j)}
- w_{\nu_s(j)}| \leqsl M_z \sum_{j=1}^n |z_j - w_j|
\end{multline*}
(cf. ($\bullet\bullet$)), which yields \eqref{eqn:aux1}.\par
The inequality \eqref{eqn:loc-L} implies that there is a Lipschitz function
$$g_z\dd \bar{\Omega}^{[n]} \cap [B(z,\epsi_z)]^n \to \CCC$$ such that
\begin{equation}\label{eqn:aux2}
g_z(z_1,\ldots,z_n) = \Delta(z_1,\ldots,z_n) f
\end{equation}
for $(z_1,\ldots,z_n) \in \Omega^{(n)} \cap [B(z,\epsi_z)]^n$. The density argument combined with
(DD1) yields the symmetry of $g_z$.\par
The above argument shows that $f \in \DDC^{n-1}(\Omega) (\subset \DDB^{n-1}(\Omega))$. So,
it follows from the induction hypothesis that there are a locally compact set $G_0$ ($\Omega \subset
G_0 \subset \bar{\Omega}$) and locally Lipschitz symmetric functions $F_k\dd G_0^{[k]} \to \CCC$
($k=1,\ldots,n-1$) such that
\begin{equation}\label{eqn:Fk}
F_k(z_1,\ldots,z_k) = \Delta(z_1,\ldots,z_k) f
\end{equation}
whenever $(z_1,\ldots,z_k) \in \Omega^{(k)}$. For simplicity, we introduce the following notation:
for any $z = (z_1,\ldots,z_n) \in \CCC^n$ and $j \in \{1,\ldots,n\}$, let $z'_j$ be the vector
in $\CCC^{n-1}$ which is obtained from $z$ by erasing its $j$th coordinate. Observe that for any $j,
k, l, m \in \{1,\ldots,n\}$ the set $A := \{z = (z_1,\ldots,z_n) \in G_0^{[n]}\dd\
(F_{n-1}(z'_j) - F_{n-1}(z'_k))(z_m - z_l) = (F_{n-1}(z'_l) - F_{n-1}(z'_m))(z_k - z_j)\}$ is
relatively closed in $G_0^{[n]}$. What is more, we deduce from (DD1) and \eqref{eqn:Fk} that $A
\supset \Omega^{(n)}$. (Note that if $j \neq k$ and $(z_1,\ldots,z_n) \in \Omega^{(n)}$, then
$[F_{n-1}(z'_j) - F_{n-1}(z'_k)] / (z_k - z_j) = [\Delta(z'_j)f - \Delta(z'_k)f] / (z_k - z_j) =
\Delta(z_1,\ldots,z_n)f$.) So, the density of $\Omega^{(n)}$ in $G_0^{[n]}$ implies that $A =
G_0^{[n]}$. We conclude that if $(z_1,\ldots,z_n) \in G_0^{[n]}$ and $j, k, l, m \in \{1,\ldots,n\}$
are such that $z_j \neq z_k$ and $z_l \neq z_m$, then
$\frac{F_{n-1}(z'_j) - F_{n-1}(z'_k)}{z_k - z_j} = \frac{F_{n-1}(z'_l) - F_{n-1}(z'_m)}{z_m - z_l}$.
The above property enables us to define properly a function $F\dd G_0^{[n]} \setminus \Theta_n \to
\CCC$ (see \eqref{eqn:diag} for the definition of $\Theta_n$) by the rule:
$$F(z_1,\ldots,z_n) = \frac{F_{n-1}(z'_j) - F_{n-1}(z'_k)}{z_k - z_j}$$
where $j$ and $k$ are chosen so that $z_j \neq z_k$. $F$ is locally Lipschitz---since locally it is
the quotient of two locally Lipschitz functions. And $F$ is symmetric since $F_{n-1}$ is such.
Finally,
\begin{equation}\label{eqn:F}
F(z_1,\ldots,z_n) = \Delta(z_1,\ldots,z_n)f \qquad ((z_1,\ldots,z_n) \in \Omega^{(n)}),
\end{equation}
which follows from \eqref{eqn:Fk}.\par
To end the proof, observe that the functions $F$ and $g_z$'s ($z \in \Omega'$) agree. Hence
it suffices to define $G$ as the intersection of $G_0$ and $\bar{\Omega} \cap \bigcup_{z\in\Omega'}
B(z,\epsi_z)$ and the extension of $\Omega^{(n)} \ni (z_1,\ldots,z_n) \mapsto \Delta(z_1,\ldots,z_n)
\in \CCC$ we search for as the union of $F$ and $g_z$'s (understood as a function on $G^{[n]}$).
\end{proof}

Since the proof of the next result is similar to the above, we skip it.

\begin{pro}{DDC}
If $f \in \DDC^n(\Omega)$, then for any $k \in \{1,\ldots,n+1\}$ the function $\Omega^{(k)} \ni
(z_1,\ldots,z_k) \to \Delta(z_1,\ldots,z_k) f \in \CCC$ extends to a continuous symmetric function
$F_k\dd \Omega^{[k]} \to \CCC$.
\end{pro}

It is an easy exercise that a holomorphic function is of class $\DDC^{\infty}$ (this immediately
follows from \THM{gen-cont} below). Divided differences are also involved in the characterization
of one real variable functions extendable to functions of class $C^k$ given by Whitney \cite{wh2}:

\begin{thm}{whitney2}
Let $\Omega \subset \RRR$ be a closed set and $k \geqsl 0$. A function $f\dd \Omega \to \CCC$
extends to a function of class $C^k$ of $\RRR$ into $\CCC$ iff $f \in \DDC^k(\Omega)$.
\end{thm}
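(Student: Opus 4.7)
The plan is to treat the two implications separately: necessity reduces almost immediately to an integral representation of divided differences, while sufficiency is the genuine content and requires an explicit extension across the gaps of $\Omega$.

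For necessity, suppose $F \in C^k(\RRR,\CCC)$ agrees with $f$ on $\Omega$. The Hermite--Genocchi formula gives, for $(x_0,\ldots,x_j) \in \Omega^{(j+1)}$ and $0 \leqsl j \leqsl k$,
$$\Delta(x_0,\ldots,x_j) f = \int_{\Sigma_j} F^{(j)}\Bigl(\sum_{i=0}^j t_i x_i\Bigr) \dint{t},$$
where $\Sigma_j$ is the standard simplex in $\RRR^{j+1}$. The right-hand side is clearly continuous in $(x_0,\ldots,x_j)$ on all of $\RRR^{j+1}$, so in particular it admits a finite limit $F^{(j)}(z)/j!$ as $(x_0,\ldots,x_j) \to (z,\ldots,z)$ for any $z \in \Omega'$. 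Hence $f \in \DDC^k(\Omega)$.

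For sufficiency, assume $f \in \DDC^k(\Omega)$. By \PRO{DDC} the divided differences extend to continuous symmetric functions $F_{j+1}\dd \Omega^{[j+1]} \to \CCC$ for $0 \leqsl j \leqsl k$, so I can define candidate Taylor coefficients on $\Omega$ by $g_j(x) := j! \, F_{j+1}(x,\ldots,x)$, noting $g_0 = f$ and each $g_j$ is continuous on $\Omega$. Since $\Omega \subset \RRR$ is closed, the complement is a countable disjoint union of open intervals. On each bounded component $(a,b)$ I would set
$$F(x) = \varphi\Bigl(\tfrac{x-a}{b-a}\Bigr) T_a(x) + \Bigl(1 - \varphi\Bigl(\tfrac{x-a}{b-a}\Bigr)\Bigr) T_b(x),$$
where $T_c(x) = \sum_{j=0}^k \frac{g_j(c)}{j!}(x-c)^j$ is the prescribed Taylor polynomial at the endpoint $c$ and $\varphi \in C^\infty(\RRR)$ is a fixed bump that equals $1$ near $0$ and $0$ near $1$; on an unbounded complementary interval I would simply use the single relevant Taylor polynomial multiplied by a fixed smooth cutoff.

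The main obstacle is showing $F \in C^k(\RRR,\CCC)$ at points of $\Omega$, most delicately at points of $\Omega'$ that are accumulated by complementary gaps. The key estimate is that on a gap $(a,b)$ one has
$$\bigl|T_a^{(r)}(x) - T_b^{(r)}(x)\bigr| \leqsl C \, (b-a)^{k-r} \cdot \omega(b-a) \qquad (0 \leqsl r \leqsl k),$$
where $\omega$ is a modulus of continuity of $F_{k+1}$ near the diagonal $(z,\ldots,z)$; this is exactly the input supplied by $f \in \DDC^k$, since the difference of the two Taylor polynomials (and their derivatives) can be rewritten, via repeated use of the divided-difference recursion (DD2) evaluated at clusters of points inside $[a,b]$, in terms of values of the symmetric extensions $F_{j+1}$ at points converging to a diagonal. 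Combined with the continuity of each $g_j$ on $\Omega$, this estimate forces the difference quotients of $F$ up to order $k$ to converge to $g_j$ as $x \to z \in \Omega$ from either side, verifying $F^{(j)}(z) = g_j(z)$ and the continuity of $F^{(j)}$ on all of $\RRR$. The countable gluing is painless because the constructions on distinct complementary intervals do not interact.
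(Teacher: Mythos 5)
Note first that the paper does not prove \THM{whitney2} at all: it is stated as Whitney's theorem with \cite{wh2} as the source, so there is no in-paper argument to compare yours with, and your proposal has to stand on its own. Its necessity half does: divided differences depend only on the values at their nodes, so $\Delta(x_0,\ldots,x_k)f=\Delta(x_0,\ldots,x_k)F$ for nodes in $\Omega$, and the Hermite--Genocchi representation exhibits this as the restriction of a function continuous on all of $\RRR^{k+1}$, which is precisely the limit condition defining $\DDC^k(\Omega)$ (the mean value theorem for divided differences, as used in the proof of \COR{DDB}, gives the same conclusion after separating real and imaginary parts).

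The sufficiency half has the right architecture (it is the one-dimensional Whitney construction), but as written it has genuine gaps. First, for $j\geqsl 1$ the quantity $g_j(x)=j!\,F_{j+1}(x,\ldots,x)$ is defined only for $x\in\Omega'$, because $(x,\ldots,x)\in\Omega^{[j+1]}$ forces $x\in\Omega'$; hence ``each $g_j$ is continuous on $\Omega$'' is not meaningful, and $T_a$, $T_b$ are undefined whenever a gap endpoint is an isolated point of $\Omega$. This is exactly the delicate configuration, since isolated points can accumulate at a point of $\Omega'$ (e.g. $\Omega=\{0\}\cup\{1/n\dd\ n\geqsl 1\}$): there you must choose jets at the isolated endpoints and verify that they satisfy the same compatibility estimates, a careless choice really does destroy $C^k$-smoothness at the limit point; the usual repair (close to Whitney's own construction) builds the polynomial attached to a gap from interpolation at $k+1$ points of $\Omega$ chosen near the gap rather than from jets at its endpoints. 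Second, the key estimate is asserted rather than derived, and ``clusters of points inside $[a,b]$'' cannot be meant literally, since $(a,b)\cap\Omega=\varempty$; the correct route is a telescoping identity writing $T_b(x)-T_a(x)$ as a sum over $m$ of differences of two values of $F_{k+1}$ at nodes taken from $\{a,b\}$ only ($a$ repeated $k-m$ times and $b$ repeated $m+1$ times, versus $a$ repeated $k-m+1$ times and $b$ repeated $m$ times) multiplied by $(x-a)^{k-m}(x-b)^m$, and this presupposes $a,b\in\Omega'$, needs a density argument showing that $F_{k+1}$ at repeated nodes is still the leading coefficient of the corresponding Hermite interpolant, and uses that $\Omega^{[k+1]}$ is closed in $\RRR^{k+1}$ (this is where closedness of $\Omega$ enters) to obtain a uniform modulus $\omega$ near the diagonal. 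Third, the single-gap estimate together with continuity of the $g_j$ does not by itself give differentiability of $F$ at points of $\Omega'$: you also need the Whitney compatibility $g_j(y)-\sum_{i=0}^{k-j}g_{j+i}(x)(y-x)^i/i!=o(|y-x|^{k-j})$ for arbitrary $x,y\in\Omega$ tending to the same point (obtained from analogous repeated-node identities), plus the standard mean-value induction on the order of the derivative; ``the estimate forces the difference quotients to converge'' glosses over this. None of these defects is unrepairable---the statement is, after all, Whitney's theorem---but the missing steps are the actual content of \cite{wh2}, and your sketch does not yet supply them.
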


Now we prove a generalization of the easier part of the above result.

\begin{pro}{DD-T}
Each function of class $\DDC^k$ is of class $\TC^k$. Moreover, if $f \in \DDC^k(\Omega)$ and
the functions $F_j\dd \Omega^{[j]} \to \CCC\ (j=1,\ldots,k)$ are as in \textup{\PRO{DDC}}, then
$f^{(j-1)}(z) = (j-1)! \cdot F_j(z,\ldots,z)$ for any $j \in \{1,\ldots,k+1\}$ and $z \in \Omega'$.
\end{pro}
\begin{proof}
Fix $z \in \Omega'$, take arbitrary $w \in \Omega$ and let $z_1,\ldots,z_k \in \Omega \setminus
\{w\}$ be distinct points. Put
$$W(z) = \sum_{j=1}^k F_j(z_1,\ldots,z_j) \prod_{s=1}^{j-1} (z - z_s) + F_{k+1}(z_1,\ldots,z_k,w)
\prod_{s=1}^k (z-z_s).$$
It follows from (DD2) that $W(w) = f(w)$, from which we deduce that
\begin{equation}\label{eqn:aux3}
f(w) = \sum_{j=1}^k F_j(z_1,\ldots,z_j) \prod_{s=1}^{j-1} (w - z_s) + F_{k+1}(z_1,\ldots,z_k,w)
\prod_{s=1}^k (w - z_s).
\end{equation}
Now if $z_j \to z$ for $j \in \{1,\ldots,k\}$, \eqref{eqn:aux3} changes into (thanks
to the continuity of $F_1,\ldots,F_{k+1}$):
\begin{multline*}
f(w) = \sum_{j=1}^k F_j(z,\ldots,z) (w - z)^{j-1} + F_{k+1}(z,\ldots,z,w) (w - z)^k\\
= \sum_{j=1}^{k+1} F_j(z,\ldots,z) (w - z)^{j-1} + [F_{k+1}(z,\ldots,z,w) - F_{k+1}(z,\ldots,z,z)]
(w - z)^k.
\end{multline*}
So, to finish the proof it suffices to define $\tau_z\dd \Omega \to \CCC$ as $\tau_z(w) =
F_{k+1}(z,\ldots,z,w) - F_{k+1}(z,\ldots,z,z)$ and note that the function $\Omega' \times \Omega \ni
(z,w) \mapsto \tau_z(w) \in \CCC$ is continuous.
\end{proof}

As a simple consequence of the above result, we obtain

\begin{cor}{DDB}
Let $\Omega \subset \RRR$ be a subinterval or an open subset of the real line and $k \geqsl 1$.
A function $f\dd \Omega \to \CCC$ is of class $\DDB^k$ iff $f$ is of class $C^{k-1}$ and $f^{(k-1)}$
is locally Lipschitz.
\end{cor}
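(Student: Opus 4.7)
The plan is to prove both implications separately: the forward implication follows by chaining together the machinery already developed, while the backward implication calls for a direct mean-value estimate on divided differences.

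For the forward direction, starting from $f \in \DDB^k(\Omega)$, \PRO{B-C} gives $f \in \DDC^{k-1}(\Omega)$ together with a locally Lipschitz symmetric extension $F_k\dd G^{[k]} \to \CCC$ of the $k$-fold divided difference, where $\Omega \subset G \subset \bar{\Omega}$ is locally compact. \PRO{DD-T} then yields $f \in \TC^{k-1}$ and the identity $f^{(k-1)}(z) = (k-1)! \cdot F_k(z,\ldots,z)$ for each $z \in \Omega'$, while \PRO{whitney1} promotes $\TC^{k-1}$ to the classical $C^{k-1}$. Since any nondegenerate subinterval or open subset of $\RRR$ satisfies $\Omega' = \Omega$, the diagonal embedding $\delta\dd z \mapsto (z,\ldots,z)$ maps $\Omega$ into $\Omega^{[k]} \subset G^{[k]}$ and is Lipschitz, so the composition $f^{(k-1)} = (k-1)! \cdot F_k \circ \delta$ is locally Lipschitz.

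For the backward direction, assuming $f \in C^{k-1}$ with $f^{(k-1)}$ locally Lipschitz, I would fix $z \in \Omega$, select a closed subinterval $I \subset \Omega$ that is a neighborhood of $z$ on which $f^{(k-1)}$ is Lipschitz with some constant $L$, and bound $|\Delta(z_1,\ldots,z_{k+1})f|$ uniformly for distinct $z_1,\ldots,z_{k+1} \in I$. Using the symmetry property (DD1), one may reorder the points so that $z_1 < \cdots < z_{k+1}$, and then write
\[\Delta(z_1,\ldots,z_{k+1})f = \frac{\Delta(z_2,\ldots,z_{k+1})f - \Delta(z_1,\ldots,z_k)f}{z_{k+1} - z_1}.\]
Since $f \in C^{k-1}$ on the interval $I$, the classical mean-value theorem for divided differences (see \cite{cdb}) produces $\eta_1 \in [z_1,z_k]$ and $\eta_2 \in [z_2,z_{k+1}]$ satisfying $\Delta(z_1,\ldots,z_k)f = f^{(k-1)}(\eta_1)/(k-1)!$ and $\Delta(z_2,\ldots,z_{k+1})f = f^{(k-1)}(\eta_2)/(k-1)!$. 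The Lipschitz estimate on $f^{(k-1)}$ together with $|\eta_2 - \eta_1| \leqsl z_{k+1} - z_1$ then gives $|\Delta(z_1,\ldots,z_{k+1})f| \leqsl L/(k-1)!$, which is the desired uniform bound.

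The only real subtlety lies in the backward direction: because only $C^{k-1}$ regularity is available, one cannot apply the mean-value theorem for divided differences directly to the full $(k+1)$-fold object (which would demand $C^k$). Instead, it must be rewritten as a difference quotient of two $k$-fold divided differences, each of which is expressible via $f^{(k-1)}$, with the Lipschitz hypothesis serving as the effective substitute for the missing extra derivative.
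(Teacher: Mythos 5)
Your argument is correct and takes essentially the same route as the paper's proof: the forward direction chains \PRO{B-C}, \PRO{DD-T} and \PRO{whitney1} and reads off the local Lipschitz property of $f^{(k-1)}$ from the locally Lipschitz extension $F_k$ restricted to the diagonal, while the converse rewrites the $(k+1)$-fold divided difference as a difference quotient of two $k$-fold ones and combines the mean-value theorem for divided differences with the Lipschitz bound on $f^{(k-1)}$, exactly as in the paper. The only point to add is the paper's one-line reduction to $u \in \{\RE f,\IM f\}$ before invoking that mean-value theorem, which holds only for real-valued functions; with this reduction your estimate $L/(k-1)!$ (the paper's write-up drops the factorial, which is immaterial for the required boundedness) goes through verbatim.
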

\begin{proof}
First assume $f$ is of class $C^{k-1}$ and $f^{(k-1)}$ is locally Lipschitz. Fix $a \in \Omega$ and
let $\epsi > 0$ and $M$ be such that $|f^{(k-1)}(x) - f^{(k-1)}(y)| \leqsl M |x - y| $ for any $x, y
\in \Omega \cap [a - \epsi,a + \epsi]$ and this last set is an interval. Let $x_1,\ldots,x_{k+1}$ be
distinct points of $\Omega \cap [a - \epsi,a + \epsi]$. It suffices to check that
$|\Delta(x_1,\ldots,x_{k+1}) u| \leqsl M$ for $u \in \{\RE f,\IM f\}$. So, we may assume $f$ is
real-valued. Moreover, thanks to property (DD1) we may also assume that $x_1 < \ldots < x_{k+1}$.
It follows from the mean value theorem for divided differences (see e.g. the argument on page~369
in \cite{wh2}) that there are $\xi \in (x_1,x_k)$ and $\eta \in (x_2,x_{k+1})$ such that
$\Delta(x_1,\ldots,x_k) f = f^{(k-1)}(\xi)$ and $\Delta(x_2,\ldots,x_{k+1}) f = f^{(k-1)}(\eta)$.
Finally, observe that then
$$|\Delta(x_1,\ldots,x_{k+1}) f| = \frac{|f^{(k-1)}(\eta) - f^{(k-1)}(\xi)|}{x_{k+1} - x_1} \leqsl
M \cdot \frac{|\eta - \xi|}{x_{k+1} - x_1} \leqsl M.$$
Conversely, if $f \in \DDB^k(\Omega)$, then $f \in \DDC^{k-1}(\Omega)$ (by \PRO{B-C}) and hence
$f$ is of class $\TC^{k-1}$ (cf. \PRO{DD-T}). Consequently, $f$ is of class $C^{k-1}$, by Whitney's
theorem (\PRO{whitney1}). Finally, we infer from Propositions~\ref{pro:DD-T} and \ref{pro:B-C} that
$f^{(k-1)}$ is locally Lipschitz.
\end{proof}

\begin{rem}{hered}
Using similar arguments as those in the proof of \PRO{DD-T}, one may check that if a function $f\dd
\Omega \to \CCC$ is of class $\DDC^k$ ($k = 1,2,\ldots,\infty$), then for any $j < k$ ($j \geqsl 1$)
the function $f^{(j)}\dd \Omega' \to \CCC$ is of class $\TC^{k-j}$ and $(f^{(j)})^{(s)} =
(f^{(j+s)})\bigr|_{\Omega''}$ (where $\Omega'' = (\Omega')'$) for $s = 1,\ldots,k-j$.
\end{rem}

\SECT{Continuity of functional calculus}

Let us begin this section with a reminder that for totally arbitrary function $f\dd \Omega \to \CCC$
the function $f_{\op}\dd \ddD_k(\Omega) \to \ddD_k(\CCC)$ is well defined for any $k$
by the following rule: $f[D] = W[D]$ where $D$ is a diagonalizable square matrix and $W$ is any
polynomial such that $f\bigr|_{\spc(D)} = W\bigr|_{\spc(D)}$. We are interested in those functions
$f$ for which $f_{\op}$ is continuous on $\ddD_k(\Omega)$ (for fixed $k$). Observe that $f_{\op}\dd
\ddD_1(\Omega) \to \mmM_1(\CCC)$ may naturally be identified with $f\dd \Omega \to \CCC$ and hence
for $k=1$ the characterization is trivial ($f_{\op}\dd \ddD_1(\Omega) \to \mmM_1(\CCC)$ is
continuous iff $f$ is such). Therefore everywhere below we will assume that $k > 1$.\par
In what follows, we shall use the following concept, very often practiced. For a $k \times k$ matrix
$A$ we write $\spc(A) = \{\lambda_1,\ldots,\lambda_k\}$ iff the characteristic polynomial $W_A(z) =
\det(zI - A)$ of $A$ has the form $W_A(z) = (z-\lambda_1)\cdot\ldots\cdot(z-\lambda_k)$. (So,
in the unordered $k$-tuple $\{\lambda_1,\ldots,\lambda_k\}$ the number of appearances of each
of eigenvalues of $A$ coincides with its \textit{algebraic multiplicity}, i.e. its multiplicity
as a root of $W_A$.)\par
The following is a well-known result (see e.g. \cite[Theorem~3.1.2]{ser}). We will apply it
in the proof of \THM{gen-cont} below.

\begin{pro}{conv-spc}
If $k \times k$ matrices $A_1,A_2,A_3,\ldots$ converge to a matrix $A$ and $\sigma(A) =
\{\lambda^{(1)},\ldots,\lambda^{(k)}\}$, then there are scalar sequences
$(\lambda^{(1)}_n)_{n=1}^{\infty}$,\ldots,$(\lambda^{(k)}_n)_{n=1}^{\infty}$ such that
$\lim_{n\to\infty} \lambda^{(j)}_n = \lambda^{(j)}$ for $j=1,\ldots,k$ and $\sigma(A_n) =
\{\lambda^{(1)}_n,\ldots,\lambda^{(k)}_n\}$ for any $n \geqsl 1$.
\end{pro}

The proof of next simple result is left to the reader.

\begin{lem}{closure}
The closure of $\ddD_k(\Omega)$ in $\mmM_k(\CCC)$ coincides with $\mmM_k^o(\bar{\Omega})$.
\end{lem}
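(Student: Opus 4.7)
I would prove the two inclusions $\overline{\ddD_k(\Omega)} \subset \mmM_k^o(\bar{\Omega})$ and $\mmM_k^o(\bar{\Omega}) \subset \overline{\ddD_k(\Omega)}$ separately. The first rests on \PRO{conv-spc} together with the upper semicontinuity of $\dim \ker$ under matrix limits; the second rests on perturbing a Jordan canonical form.

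For the first inclusion, suppose $X_n \to X$ with $X_n \in \ddD_k(\Omega)$. \PRO{conv-spc} already delivers $\spc(X) \subset \bar{\Omega}$, so only the cluster-point condition on multiple roots of $\mu_X$ requires work. Fix a multiple root $\lambda$ of $\mu_X$ and let $r$, $g$ denote respectively its algebraic and geometric multiplicity in $X$; then $r > g$. Since each $X_n$ is diagonalizable, the $\lambda$-eigenspace of $X_n$ has dimension equal to the algebraic multiplicity $a_n$ of $\lambda$ in $X_n$. From $X_n - \lambda I_k \to X - \lambda I_k$ and upper semicontinuity of $\dim \ker$ (equivalently, lower semicontinuity of rank), one gets $a_n \leqsl g$ for all large $n$. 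On the other hand \PRO{conv-spc} provides $r$ sequences of eigenvalues of $X_n$ tending to $\lambda$; at most $a_n \leqsl g$ of them can actually equal $\lambda$, so at least $r - g \geqsl 1$ of them lie in $\Omega \setminus \{\lambda\}$ and tend to $\lambda$. Hence $\lambda \in \bar{\Omega} \cap \overline{\bar{\Omega} \setminus \{\lambda\}} = (\bar{\Omega})'$, which is what was needed.

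For the reverse inclusion, given $X \in \mmM_k^o(\bar{\Omega})$ write $X = P J P^{-1}$ with $J$ in Jordan canonical form. I modify the diagonal of each $\lambda$-part of $J$ as follows. If $\lambda$ is a simple root of $\mu_X$, every Jordan block for $\lambda$ is $1 \times 1$ and the $\lambda$-part of $J$ is already $\lambda I_{r_\lambda}$; when $\lambda \in \bar{\Omega} \setminus \Omega$ I replace all those entries by a single $\mu_n \in \Omega$ with $\mu_n \to \lambda$, and otherwise keep $\lambda$. If instead $\lambda$ is a multiple root of $\mu_X$, then $\lambda \in (\bar{\Omega})'$ by hypothesis, so density of $\Omega$ in $\bar{\Omega}$ supplies $r_\lambda$ pairwise distinct points of $\Omega$ inside $B(\lambda,1/n)$; place these along the diagonal of the $\lambda$-part while leaving the nilpotent super-diagonal untouched. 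Choosing the balls around different eigenvalues pairwise disjoint, the resulting matrix $J^{(n)}$ is a direct sum whose summands are either scalar multiples of the identity or upper-triangular with pairwise distinct diagonal entries in $\Omega$; each such summand is diagonalizable, so $P J^{(n)} P^{-1} \in \ddD_k(\Omega)$, and $J^{(n)} \to J$ yields $P J^{(n)} P^{-1} \to X$.

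The main subtlety lies in the first inclusion: \PRO{conv-spc} alone only gives $\spc(X) \subset \bar{\Omega}$, and to push a multiple root of $\mu_X$ into $(\bar{\Omega})'$ one has to exploit the rigidity contributed by the diagonalizability of each $X_n$---namely that geometric and algebraic multiplicities coincide at every eigenvalue of $X_n$---together with the semicontinuity of the kernel dimension. Once this bookkeeping is done, both directions are routine.
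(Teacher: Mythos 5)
Your proof is correct: both inclusions are sound --- the lower semicontinuity of rank combined with the equality of algebraic and geometric multiplicities for the diagonalizable $X_n$ correctly forces every multiple root of $\mu_X$ into $(\bar{\Omega})'$, and the Jordan-form perturbation (distinct nearby points of $\Omega$ on the diagonal of each part with a multiple root, a single approximating point for the scalar parts) is the standard density argument. The paper leaves this lemma to the reader, so there is no written proof to compare against; your argument fills that gap along exactly the expected lines.
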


The above lemma explains the role played by the set $\mmM_k^o(\Omega)$ and shows that this set
appears quite naturally in topological aspects.\par
Now we are ready to state and prove the main result of the section.

\begin{thm}{gen-cont}
For an arbitrary function $f\dd \Omega \to \CCC$ and $k \geqsl 2$ \tfcae
\begin{enumerate}[\upshape(i)]
\item for any $\lambda \in \Omega'$ there are positive real numbers $\epsi = \epsi(\lambda)$ and
   $M = M(\lambda)$ such that $\|f[X]\| \leqsl M$ provided $X \in \ddD_k(\Omega)$ is such that
   $\|X - \lambda I\| \leqsl \epsi$;
\item $f_{\op}\dd \ddD_k(\Omega) \to \ddD_k(\CCC)$ is continuous;
\item $f$ is of class $\TC^{k-2}$ and $f_{\op}\dd \mmM_k^o(\Omega) \setminus \zzZ_k \to
   \mmM_k(\CCC)$ is continuous;
\item $f \in \DDB^{k-1}(\Omega)$.
\end{enumerate}
Moreover, if condition \textup{(ii)} is fulfilled, then $f$ extends to a function $\tilde{f}\dd
\tilde{\Omega} \to \CCC$ with $\tilde{\Omega} \supset \Omega$ locally compact such that
$\tilde{f}_{\op}\dd \ddD_k(\tilde{\Omega}) \to \ddD_k(\CCC)$ is continuous.
\end{thm}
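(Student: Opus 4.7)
The plan is to prove the equivalences cyclically, (ii) $\Rightarrow$ (i) $\Rightarrow$ (iv) $\Rightarrow$ (iii) $\Rightarrow$ (ii), and to read off the ``moreover'' assertion from the argument for (i) $\Rightarrow$ (iv). Two of the implications are essentially tautological: for (ii) $\Rightarrow$ (i) I would observe that $\lambda I \in \ddD_k(\Omega)$ for every $\lambda \in \Omega' \subset \Omega$ and that $f[\lambda I] = f(\lambda)I$, so continuity of $f_{\op}$ at $\lambda I$ yields the required local boundedness; and for (iii) $\Rightarrow$ (ii) I would note that a diagonalizable matrix has $\mu_X$ with simple roots, which gives the inclusion $\ddD_k(\Omega) \subset \mmM_k^o(\Omega) \setminus \zzZ_k$.

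The heart of the argument is (i) $\Rightarrow$ (iv), which I would establish via Opitz's formula. Given $\lambda \in \Omega'$ with constants $\epsi = \epsi(\lambda)$ and $M = M(\lambda)$ from (i), distinct $\lambda_1,\ldots,\lambda_k \in \Omega$ near $\lambda$, and a parameter $\delta > 0$, consider the bidiagonal matrix $X_\delta := \Diag(\lambda_1,\ldots,\lambda_k) + \delta N$, where $N$ is the standard super-shift. Conjugation by $S := \Diag(1,\delta^{-1},\ldots,\delta^{-(k-1)})$ yields $S^{-1} X_\delta S = \Diag(\lambda_1,\ldots,\lambda_k) + N$, and Opitz's formula \cite{GOpitz1964} identifies the $(1,k)$-entry of $f$ of this bidiagonal matrix with $\Delta(\lambda_1,\ldots,\lambda_k)f$; undoing the conjugation gives
$$(f[X_\delta])_{1k} \;=\; \delta^{k-1}\,\Delta(\lambda_1,\ldots,\lambda_k)f.$$
Fixing $\delta$ small and requiring $|\lambda_i - \lambda|$ sufficiently small to force $\|X_\delta - \lambda I\| \leqsl \epsi$ (an easy Frobenius-norm computation), the bound $\|f[X_\delta]\| \leqsl M$ yields $|\Delta(\lambda_1,\ldots,\lambda_k)f| \leqsl M/\delta^{k-1}$, uniform in the $\lambda_i$'s; that is, $f \in \DDB^{k-1}(\Omega)$.

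For (iv) $\Rightarrow$ (iii), the regularity part is direct: \PRO{B-C} yields $f \in \DDC^{k-2}(\Omega)$, so by \PRO{DD-T} the function $f$ is of class $\TC^{k-2}$ and therefore $f_{\op}$ is defined on $\mmM_k^o(\Omega) \setminus \zzZ_k$. For continuity at a point $X$ in this set I would represent $f[Y] = W_Y[Y]$ for nearby $Y$ via Newton's form of the Hermite interpolation polynomial at the roots of $\mu_Y$, whose coefficients are divided differences of $f$ at the eigenvalues of $Y$ (with multiplicities) and which converge to those of $X$ by \PRO{conv-spc}. Divided differences at up to $k - 1$ arguments extend continuously by \PRO{B-C}, so the only coefficient that could misbehave is the $k$-argument one, which \PRO{B-C} guarantees only as locally bounded. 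Here one exploits $X \notin \zzZ_k$: since $X$ has at least two distinct eigenvalues, the recursion $\Delta(z_1,\ldots,z_k)f = [\Delta(z_2,\ldots,z_k)f - \Delta(z_1,\ldots,z_{k-1})f]/(z_k - z_1)$ can be applied with an ordering such that $z_k - z_1$ stays bounded away from $0$ in the limit, which reduces the $k$-argument coefficient to a continuous expression in $(k-1)$-argument ones. For the ``moreover'' assertion, \PRO{B-C} applied to $f \in \DDB^{k-1}(\Omega)$ yields a locally compact $G$ with $\Omega \subset G \subset \bar{\Omega}$ and locally Lipschitz symmetric extensions of divided differences of $f$ to $G^{[j]}$ for $j \leqsl k - 1$; the extension at $j = 1$ defines $\tilde{f}$ on $\tilde{\Omega} := G$, and the locally Lipschitz character of the $(k-1)$-argument extension forces local boundedness of the $k$-argument divided difference of $\tilde{f}$, i.e. $\tilde{f} \in \DDB^{k-1}(\tilde{\Omega})$, whereupon (iv) $\Rightarrow$ (ii) applied to $\tilde{f}$ closes the argument.

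The main obstacle is the continuity claim in (iv) $\Rightarrow$ (iii). The naive attempt to use Newton's formula at $X$ with $\deg \mu_X = k$ immediately confronts a $k$-argument divided difference of $f$, which the hypothesis $f \in \DDB^{k-1}(\Omega)$ yields only as locally bounded, not continuously extendable. The geometric content of excluding $\zzZ_k$ — that $X$ has at least two distinct eigenvalues — is precisely what allows one to bypass this obstruction via the recursive formula for divided differences; without it, genuine Jordan-block pathologies (as illustrated elsewhere in the paper for $k = 2$) prevent continuity.
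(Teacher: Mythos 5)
Your overall scheme coincides with the paper's: (ii)$\Rightarrow$(i) and (iii)$\Rightarrow$(ii) are indeed immediate, your (i)$\Rightarrow$(iv) via the bidiagonal matrix and the Opitz formula is the same computation the paper performs with the matrices $A_{\ell,\epsi}$ and their bottom-left corner, and the ``moreover'' part via \PRO{B-C} (extending $f$ to a locally compact $\tilde{\Omega}$ with $\tilde f \in \DDB^{k-1}(\tilde{\Omega})$ through the Lipschitz symmetric extension $F_{k-1}$) is also how the paper proceeds. The genuine gap is in the heart of (iv)$\Rightarrow$(iii): you misread the set $\zzZ_k$. By \eqref{eqn:singular}, $\zzZ_k$ consists only of matrices similar to a single Jordan block of full size $k$; its complement in $\mmM_k^o(\Omega)$ contains plenty of matrices with exactly one eigenvalue --- e.g.\ $\lambda I$ itself, or any $X$ with $\spc(X)=\{\lambda\}$ and $(X-\lambda I)^{k-1}=0$. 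At such a point your device of running the recursion $\Delta(z_1,\ldots,z_k)f=[\Delta(z_2,\ldots,z_k)f-\Delta(z_1,\ldots,z_{k-1})f]/(z_k-z_1)$ with $z_k-z_1$ bounded away from $0$ cannot work, because all eigenvalues of the approximating diagonalizable matrices collapse to the single point $\lambda$. This case is not peripheral: continuity at the scalar matrices $\lambda I$, $\lambda\in\Omega'$, is precisely where conditions (i) and (ii) have content, so as written your argument proves continuity only at points with at least two distinct eigenvalues (cf.\ \REM{distinct}, which needs no hypothesis beyond continuity of $f$ there).

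What is missing is exactly the step where the exclusion of $\zzZ_k$ is genuinely used in the paper: writing $f[X_n]=V_n[X_n]$ in Newton form for diagonalizable $X_n\to X_0$ with $\card(\spc(X_0))=1$, if the $X_n$ have $k$ distinct eigenvalues then the troublesome coefficient $\Delta(\lambda_n^{(1)},\ldots,\lambda_n^{(k)})f$ is merely bounded (all that $\DDB^{k-1}$ gives), but it multiplies $\prod_{j=1}^{k-1}(X_n-\lambda_n^{(j)}I)$, which converges to $(X_0-\lambda I)^{k-1}=0$ precisely because $X_0\notin\zzZ_k$; hence that term vanishes in the limit, while the remaining coefficients involve at most $k-1$ nodes and pass to the limit through their continuous (indeed locally Lipschitz) extensions, matching $\sum_j \frac{f^{(j)}(\lambda)}{j!}(X_0-\lambda I)^j$. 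You need to supply this ``bounded coefficient times vanishing product'' argument; your recursion trick then covers the case $\card(\spc(X_0))>1$, exactly as in the paper. A smaller point: rather than invoking Hermite interpolation at the roots of $\mu_Y$ for non-diagonalizable nearby $Y$ (whose confluent divided differences are exactly the objects whose continuity is in question), it is cleaner to first prove convergence along diagonalizable approximants and then reduce the general case by density of $\ddD_k(\Omega)$ in $\mmM_k^o(\bar\Omega)$ (\LEM{closure}) via a diagonal argument, as the paper does. Since your (iv)$\Rightarrow$(ii) for $\tilde f$ and the ``moreover'' clause rest on the same continuity step, the gap propagates there as well.
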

\begin{proof}
First assume that $f \in \DDB^{k-1}(\Omega)$. By \PRO{B-C}, there exist a locally compact set
$\tilde{\Omega}$, $\Omega \subset \tilde{\Omega} \subset \bar{\Omega}$, and locally Lipschitz
functions $F_j\dd \tilde{\Omega}^{[j]} \to \CCC$ ($j=1,\ldots,k-1$) such that
\begin{equation}\label{eqn:aux30}
F_j(z_1,\ldots,z_j) = \Delta(z_1,\ldots,z_j)f \qquad ((z_1,\ldots,z_j) \in \Omega^{(j)})
\end{equation}
for any $j \in \{1,\ldots,k-1\}$. Define $\tilde{f}\dd \tilde{\Omega} \to \CCC$ as $F_1$. It follows
from \eqref{eqn:aux30} that $\tilde{f}$ extends $f$. What is more, since $\tilde{f}$ and $F_j$'s are
continuous and $\Omega$ is dense in $\tilde{\Omega}$, we conclude from \eqref{eqn:aux30} that also
\begin{equation}\label{eqn:tilde}
F_j(z_1,\ldots,z_j) = \Delta(z_1,\ldots,z_j)\tilde{f} \qquad ((z_1,\ldots,z_j) \in
\tilde{\Omega}^{(j)})
\end{equation}
for $j = 1,\ldots,k-1$. Since $F_{k-1}$ is locally Lipschitz and symmetric, \eqref{eqn:tilde}
implies that $\tilde{f} \in \DDB^{k-1}(\tilde{\Omega})$. Consequently, thanks
to Propositions~\ref{pro:B-C} and \ref{pro:DD-T}, $\tilde{f} \in \TC^{k-2}(\tilde{\Omega})$ and
\begin{equation}\label{eqn:deriv}
F_{j+1}(z,\ldots,z) = \frac{\tilde{f}^{(j)}(z)}{j!}  \qquad (z \in \tilde{\Omega}',\ j \in
\{0,\ldots,k-2\}).
\end{equation}
We will now show that $\tilde{f}_{\op}\dd \mmM_k^o(\tilde{\Omega}) \setminus \zzZ_k \to
\mmM_k(\CCC)$ is continuous (from which one infers (iii)). For simplicity, we put $G :=
\mmM_k^o(\tilde{\Omega}) \setminus \zzZ_k$. Recall that $\tilde{f}[X]$ makes sense for any $X \in G$
since $X \notin \zzZ_k$ (which means that the algebraic multiplicity of any eigenvalue of $X$ is
less than $k$) and $\tilde{f}$ is of class $\TC^{k-2}$.\par
Let matrices $X_1,X_2,\ldots \in G$ converge to $X_0 \in G$.\par
First assume that each of $X_n$ for $n > 0$ is diagonalizable. Write $\spc(X_0) = \{\lambda^{(1)}_0,
\ldots,\lambda^{(k)}_0\}$. \PRO{conv-spc} enables us to write (for each $n \geqsl 1$) $\spc(X_n) =
\{\lambda^{(1)}_n,\ldots,\lambda^{(k)}_n\}$ in a way such that
\begin{equation}\label{eqn:conv-eigen}
\lim_{n\to\infty} \lambda^{(j)}_n = \lambda^{(j)}_0 \qquad (j=1,\ldots,k).
\end{equation}
Further, for each $n > 0$ write $\mu_{X_n}$ in the form $\mu_{X_n}(z) = \prod_{j=1}^k
(z - \lambda^{(j)}_n)^{\nu_n(j)}$ where $\nu_n(j) \in \{0,1\}$. After passing to a subsequence and
rearranging the eigenvalues, we may assume that for some $s \in \{1,\ldots,k\}$ one has $\nu_n(j) =
1$ for $j \leqsl s$ and $\nu_n(j) = 0$ for $j > s$ (for any $n > 0$). Since $\lim_{n\to\infty}
\mu_{X_n}[X_n] = \lim_{n\to\infty} \prod_{j=1}^s (X_n - \lambda^{(j)}_n I) = \prod_{j=1}^s
(X_0 - \lambda^{(j)}_0 I)$, we conclude that
\begin{equation}\label{eqn:mu0}
\prod_{j=1}^s (X_0 - \lambda^{(j)}_0 I) = 0
\end{equation}
and thus the sets $\spc(X_0)$ and $\{\lambda^{(j)}_0\dd\ j \leqsl s\}$ coincide. Further, we infer
from the diagonalizability of $X_n$ ($n > 0$) and the formula for $\mu_{X_n}$ that
\begin{equation}\label{eqn:differ}
\lambda^{(1)}_n,\ldots,\lambda^{(s)}_n \textup{ are different for each } n > 0.
\end{equation}
So, \LEM{closure} combined with \eqref{eqn:conv-eigen} and \eqref{eqn:differ} yields
\begin{equation}\label{eqn:zero}
(\lambda^{(1)}_0,\ldots,\lambda^{(s)}_0) \in \tilde{\Omega}^{[s]}.
\end{equation}
Further, for $n > 0$ put
\begin{equation}\label{eqn:Vn}
V_n(z) = \sum_{q=1}^s \bigl[\Delta(\lambda^{(1)}_n,\ldots,\lambda^{(q)}_n)\tilde{f} \cdot
\prod_{j<q} (z - \lambda^{(j)}_n)\bigr].
\end{equation}
It follows from (DD2) that $V_n(\lambda^{(j)}_n) = \tilde{f}(\lambda^{(j)}_n)$ for $j \leqsl s$ and
hence $\tilde{f}[X_n] = V_n[X_n]$. So, we need to show that $\lim_{n\to\infty} V_n[X_n] =
\tilde{f}[X_0]$. To this end, we consider three cases.\par
First assume that $\card(\spc(X_0)) = 1$. For simplicity, denote by $\lambda$ the unique element
of $\spc(X_0)$. Then $\lim_{n\to\infty} \lambda^{(j)}_n = \lambda$ for $j \in \{1,\ldots,s\}$.
The fact that $X_0 \notin \zzZ_k$ combined with \eqref{eqn:zero} gives
\begin{equation}\label{eqn:zero'}
(X_0 - \lambda I)^{\alpha} = 0 \qquad \textup{where } \alpha := \min(k-1,s)
\end{equation}
and therefore
\begin{equation}\label{eqn:f(X0)}
\tilde{f}[X_0] = \sum_{j=0}^{\alpha-1} \frac{\tilde{f}^{(j)}(\lambda)}{j!} (X_0 - \lambda I)^j.
\end{equation}
We obtain from \eqref{eqn:tilde} and \eqref{eqn:Vn} that
$$V_n[X_n] = \sum_{q=1}^{\alpha} F_q(\lambda^{(1)}_n,\ldots,\lambda^{(q)}_n) \cdot \prod_{j<q}
(X_n - \lambda^{(j)}_n I)$$
provided $s < k$ and
$$V_n[X_n] = \sum_{q=1}^{k-1} F_q(\lambda^{(1)}_n,\ldots,\lambda^{(q)}_n) \cdot \prod_{j<q}
(X_n - \lambda^{(j)}_n I) + \Delta(\lambda^{(1)}_n,\ldots,\lambda^{(k)}_n)\tilde{f} \cdot
\prod_{q=1}^{k-1} (X_n - \lambda^{(j)}_n I)$$
otherwise. Observe that the last summand in the latter formula tends to $0$ as $n$ tends
to $\infty$, since $\tilde{f} \in \DDB^{k-1}$ and $(X - \lambda I)^{k-1} = 0$
(by \eqref{eqn:zero'}). So, in both the cases we get
$$\lim_{n\to\infty} V_n[X_n] = \sum_{q=1}^{\alpha} F_q(\lambda,\ldots,\lambda) \cdot \prod_{j<q}
(X_0 - \lambda I) = \sum_{j=0}^{\alpha-1} F_{j+1}(\lambda,\ldots,\lambda) (X_0 - \lambda I)^j.$$
So, relations \eqref{eqn:deriv} and \eqref{eqn:f(X0)} finish the proof in case $\card(\spc(X_0)) =
1$.\par
Now assume that $\card(\spc(X_0)) > 1$. Then $s > 1$. Let $\sigma$ be any permutation
of $\{1,\ldots,s\}$ such that $\sigma(1) \neq \sigma(s)$. Thanks to (DD1) and \eqref{eqn:tilde},
we may transform \eqref{eqn:Vn} into
\begin{multline*}
V_n(z) = \sum_{q=1}^s \Delta(\lambda^{(\sigma(1))}_n,\ldots,\lambda^{(\sigma(q))}_n)\tilde{f} \cdot
\prod_{j<q} (z-\lambda^{(\sigma(j))}_n)\\
= \sum_{q=1}^{s-1} F_q(\lambda^{(\sigma(1))}_n,\ldots,\lambda^{(\sigma(q))}_n) \cdot
\prod_{j<q} (z-\lambda^{(\sigma(j))}_n)\\
+ \frac{F_{s-1}(\lambda^{(\sigma(1))}_n,\ldots,\lambda^{(\sigma(s-1))}_n)
- F_{s-1}(\lambda^{(\sigma(2))}_n,\ldots,\lambda^{(\sigma(s))}_n)}
{\lambda^{(\sigma(1))}_n - \lambda^{(\sigma(s))}_n} \cdot \prod_{j=1}^{s-1} (z-\lambda^{(j)}_n).
\end{multline*}
Consequently,
\begin{multline}\label{eqn:aux31}
\lim_{n\to\infty} V_n(z) = V(z)
:= \sum_{q=1}^{s-1} F_q(\lambda^{(\sigma(1))}_0,\ldots,\lambda^{(\sigma(q))}_0) \cdot \prod_{j<q}
(z - \lambda^{(\sigma(j))}_0)\\
+ \frac{F_{s-1}(\lambda^{(\sigma(1))}_0,\ldots,\lambda^{(\sigma(s-1))}_0)
- F_{s-1}(\lambda^{(\sigma(2))}_0,\ldots,\lambda^{(\sigma(s))}_0)}
{\lambda^{(\sigma(1))}_0 - \lambda^{(\sigma(s))}_0} \cdot \prod_{j=1}^{s-1} (z - \lambda^{(j)}_0)
\end{multline}
(the above formula means, in particular, that the formula for $V$ is independent of the permutation
$\sigma$) and $\lim_{n\to\infty} V_n[X_n] = V[X_0]$. So, the proof will be completed if we show that
$V[X_0] = \tilde{f}[X_0]$. To do this, it suffices to check that $V^{(q)}(\lambda^{(j)}_0) =
\tilde{f}^{(q)}(\lambda^{(j)}_0)$ for any $j \in \{1,\ldots,s\}$ and $q$ less than the multiplicity
of $\lambda^{(j)}_0$ as a root of $\mu_{X_0}$. To this end, fix $j_0 \in \{1,\ldots,s\}$ and take
any permutation $\tau$ of $\{1,\ldots,s\}$ such that for some $p \in \{1,\ldots,s-1\}$,
$\lambda^{(\tau(1))}_0 = \ldots = \lambda^{\tau(p)}_0 = \lambda^{(j_0)}_0$ and $\lambda^{\tau(q)}_0
\neq \lambda^{(j_0)}_0$ for $q > p$. (Note that $p < s$ because $\card(\spc(X_0)) > 1$.) For
simplicity, put $w := \lambda^{(j_0)}_0$. We infer from \eqref{eqn:mu0} that then the multiplicity
of $w$ as a root of $\mu_{X_0}$ is not greater than $p$. Thus, we only need to check that
$V^{(q)}(w) = \tilde{f}^{(q)}(w)$ for $q = 0,\ldots,p-1$. Substituting in \eqref{eqn:aux31} $\tau$
for $\sigma$, we see that for a suitable polynomial $Q$ one has
$$V(z) = \sum_{q=0}^{p-1} F_{q+1}(w,\ldots,w) (z-w)^q + (z-w)^p Q(z).$$
The above, combined with \eqref{eqn:deriv}, completes the proof.\par
Now assume that $X_n$'s (for $n > 0$) are arbitrary. Fix for a moment $m > 0$. \LEM{closure} implies
that there is a sequence $Y_1,Y_2,\ldots \in \ddD_k(\tilde{\Omega})$ which converges to $X_m$.
It follows from the first part of the proof that $\lim_{n\to\infty} \tilde{f}[Y_n] = \tilde{f}[X_m]$
and thus there is $\beta_m \in \{1,2,3,\ldots\}$ such that for $X'_m := Y_{\beta_m}$ one has
\begin{equation}\label{eqn:aux32}
\|X'_m - X_m\| \leqsl \frac1m \qquad \textup{and} \qquad \|\tilde{f}[X'_m] - \tilde{f}[X_m]\| \leqsl
\frac1m.
\end{equation}
The former inequality in \eqref{eqn:aux32} shows that $\lim_{n\to\infty} X'_n = X_0$. Since $X'_n
\in \ddD_k(\tilde{\Omega})$, we conclude from the first part of the proof that $\lim_{n\to\infty}
\tilde{f}[X'_n] = \tilde{f}[X_0]$ which, combined with the latter inequality in \eqref{eqn:aux32},
yields $\lim_{n\to\infty} \tilde{f}[X_n] = \tilde{f}[X_0]$. This finishes the proof of implication
`(iv)$\implies$(iii)'.\par
Since implications `(ii)$\implies$(i)' and `(iii)$\implies$(ii)' are trivial, we only need to show
that (iv) follows from (i). To this end, fix $\lambda \in \Omega'$ and for any $\ell =
(\lambda_1,\ldots,\lambda_k) \in \Omega^{(k)}$ and a positive real number $\epsi$ denote
by $A_{\ell,\epsi}$ the matrix $[a_{p,q}]$ such that $a_{j,j} = \lambda_j$ ($j = 1,\ldots,k$),
$a_{j+1,j} = \epsi$ ($j = 1,\ldots,k-1$) and $a_{p,q} = 0$ otherwise. Notice that $A_{\ell,\epsi}
\in \ddD_k(\Omega)$, $\spc(A_{\ell,\epsi}) = \{\lambda_1,\ldots,\lambda_k\}$ and $A_{\ell,\epsi} \to
\lambda I$ provided $\epsi \to 0$ and $\ell \stackrel{\Omega^{(k)}}{\to} (\lambda,\ldots,\lambda)
\in \CCC^k$. Let $b_{\ell,\epsi}$ stand for the bottom left corner of $f[A_{\ell,\epsi}]$. So,
if (i) is fulfilled, there are $\delta > 0$ and $C \in \RRR$ such that
\begin{equation}\label{eqn:aux33}
|b_{\ell,\epsi}| \leqsl C \quad \textup{whenever } \epsi < \delta \textup{ and } \ell \in
\prod_{j=1}^k B(\lambda,\delta)
\end{equation}
(for the definition of $B(\lambda,\delta)$ see \eqref{eqn:ball}). Observe that $f[A_{\ell,\epsi}] =
V_{\ell,\epsi}[A_{\ell,\epsi}]$ where, for $\ell = (\lambda_1,\ldots,\lambda_k)$,
$$V_{\ell,\epsi}(z) = \sum_{j=1}^k \Delta(\lambda_1,\ldots,\lambda_j)f \cdot \prod_{s<j}
(z - \lambda_s).$$
Write $V_{\ell,\epsi}$ in the form $V_{\ell,\epsi}(z) = \sum_{j=0}^{k-1} \alpha_j z^j$ and note that
$\alpha_{k-1} = \Delta(\lambda_1,\ldots,\lambda_k)f$. Taking this into account, one may check that
$b_{\ell,\epsi} = \Delta(\lambda_1,\ldots,\lambda_k)f \cdot \epsi^{k-1}$ (compare with the Opitz
formula \cite{opz} or \cite[Proposition~25]{cdb}). Now observe that \eqref{eqn:aux33} holds iff
$\limsup_{\ell\stackrel{\Omega^{(k)}}{\to}(\lambda,\ldots,\lambda)} |\Delta(\ell)f| < \infty$, which
finishes the proof.
\end{proof}

\begin{rem}{distinct}
One may easily conclude from the above proof that if only $f$ is continuous, then $f_{\op}\dd
\ddD_k(\Omega) \to \ddD_k(\CCC)$ is continuous at each $X_0 \in \ddD_k(\Omega)$ with
$\card(\spc(X_0)) = k$ (for any $k$).
\end{rem}

\THM{gen-cont} shows that whenever $f_{\op}\dd \ddD_k(\Omega) \to \mmM_k(\CCC)$ is continuous,
it extends to a continuous function of $\mmM^o_k(\Omega) \setminus \zzZ_k$ into $\mmM_k(\CCC)$.
Taking this into account, it seems to be interesting the question of when $f_{\op}$ extends
to a continuous function of $\mmM^o(\Omega)$. A full answer to this problem gives

\begin{pro}{strong-cont}
For a function $f\dd \Omega \to \CCC$ and arbitrary $k \geqsl 2$ \tfcae
\begin{enumerate}[\upshape(i)]
\item $f_{\op}\dd \ddD_k(\Omega) \to \mmM_k(\CCC)$ extends to a continuous function
   of $\mmM^o_k(\Omega)$;
\item $f$ is of class $\TC^{k-1}$ and $f_{\op}\dd \mmM^o_k(\Omega) \to \mmM_k(\CCC)$ is continuous;
\item $f \in \DDC^{k-1}(\Omega)$.
\end{enumerate}
\end{pro}
\begin{proof}
As in the previous proof, first we assume that $f \in \DDC^{k-1}(\Omega)$. Our aim is to show (ii).
We know from \PRO{DD-T} and \THM{gen-cont} that $f \in \TC^{k-1}(\Omega)$. Fix arbitrary $X_0 \in
\mmM^o_k(\Omega)$ and let matrices $X_1,X_2,\ldots \in \mmM^o_k(\Omega)$ converge to $X_0$.
The argument presented in the last part of the proof of implication `(iv)$\implies$(iii)'
in \THM{gen-cont} ensures us that we may assume each of $X_n$ ($n > 0$) is diagonalizable. Then
if $X_0 \notin \zzZ_k$, we deduce from \THM{gen-cont} that $\lim_{n\to\infty} f[X_n] = f[X_0]$.
Hence we may and do assume that $X_0 \in \zzZ_k$. Let $F_j\dd \Omega^{[j]} \to \CCC$
($j=1,\ldots,n$) be as in \PRO{DDC}. Denote by $\lambda$ the unique element of $\spc(X_0)$. Write
$\spc(X_n) = \{\lambda^{(1)}_n,\ldots,\lambda^{(k)}_n\}$ ($n > 0$). Then $\lim_{n\to\infty}
\lambda^{(j)}_n = \lambda$ ($j=1,\ldots,k$). Mimicing the proof of \THM{gen-cont}, we may assume
that for some $s \in \{1,\ldots,k\}$, $\mu_{X_n}(z) = \prod_{j=1}^s (z - \lambda^{(j)}_n)$ for any
$n > 0$. Observe that $\lim_{n\to\infty} \mu_{X_n}[X_n] = (X_0 - \lambda I)^s$, from which we infer
that $s = k$ (since $X_0 \in \zzZ_k$). This implies that $\card(\spc(X_n)) = k$ for positive $n$ and
thus:
\begin{multline*}
f[X_n] = \sum_{q=1}^k \Delta(\lambda^{(1)}_n,\ldots,\lambda^{(q)}_n)f \cdot \prod_{j<q}
(X_n - \lambda^{(j)} I)\\= \sum_{q=1}^k F_q(\lambda^{(1)}_n,\ldots,\lambda^{(q)}_n) \cdot
\prod_{j<q} (X_n - \lambda^{(j)} I).
\end{multline*}
Consequently, $\lim_{n\to\infty} f[X_n] = \sum_{q=1}^k F_q(\lambda,\ldots,\lambda) \cdot
(X_0 - \lambda I)^{q-1}$. Now an application of \PRO{DD-T} allows us to transform the last equality
into $\lim_{n\to\infty} f[X_n] = \sum_{q=1}^k \frac{f^{(q-1)}(\lambda)}{(q-1)!}
(X_0 - \lambda I)^{q-1} = f[X_0]$ and we are done.\par
Since (i) is readily implied by (ii), it remains to show that (iii) follows from (i). To this end,
fix $\lambda \in \Omega'$. Denote by $A$ the matrix $[a_{p,q}]$ such that $a_{p,q} = 1$ when $p =
q+1$ and $a_{p,q} = 0$ otherwise. For $\ell \in \Omega^{(k)}$ and $\epsi > 0$ let $A_{\ell,\epsi}$
and $b_{\ell,\epsi}$ be as in the proof of \THM{gen-cont}. Note that $A + \lambda I \in
\mmM^o_k(\Omega)$, $A_{\ell,\epsi} \in \ddD_k(\Omega)$ and $A_{\ell,1+\epsi}$ tends to $A+\lambda I$
as
\begin{equation}\label{eqn:aux34}
\epsi \to 0 \quad \textup{and} \quad \ell \stackrel{\Omega^{(k)}}{\to} (\lambda,\ldots,\lambda) \in
\CCC^k.
\end{equation}
We deduce from (i) that $f[A_{\ell,1+\epsi}]$ converges when \eqref{eqn:aux34} holds. Consequently,
$b_{\ell,1+\epsi}$ converges as well. But $b_{\ell,1+\epsi} = \Delta(\ell)f \cdot (1+\epsi)^{k-1}$
and therefore $\Delta(\ell)f$ has a finite limit as $\ell \stackrel{\Omega^{(k)}}{\to}
(\lambda,\ldots,\lambda)$, which finishes the proof.
\end{proof}

Now we are ready to give

\begin{proof}[Proof of \THM{main}]
Let us start with (A). It follows from \THM{gen-cont} and \PRO{strong-cont} that each
of the conditions (i) and (ii) (in point (A)) implies that $f \in \DDC^1(\Omega)$ (recall that
$k \geqsl 3$ and take into account \PRO{B-C}). So, we infer from \PRO{DD-T} that $f \in
\TC^1(\Omega)$ and consequently $f$ is holomorphic, since $\Omega$ is open. Conversely, if $f$ is
holomorphic, then $f_{\op}\dd \mmM_j(\Omega) \to \mmM_j(\CCC)$ is holomorphic for any $j$ as well,
which is readily followed by (i) and (ii).\par
We pass to point (B). It follows from \THM{gen-cont} each of the conditions (i) and (ii) is
equivalent to the fact that $f \in \DDB^{k-1}(\Omega)$ (notice that here $\mmM_k(\Omega) =
\mmM^o_k(\Omega)$). But in these settings this last property is equivalent to (iii), thanks
to \COR{DDB}. Finally, $f_{\op}\dd \ddD_k(\Omega) \to \mmM_k(\CCC)$ extends to a continuous function
of $\mmM_k(\Omega)$ iff $f \in \DDC^{k-1}(\Omega)$ (by \PRO{strong-cont}) or, equivalently, iff
$f \in C^{k-1}(\Omega)$ (see \PRO{whitney1} and \THM{whitney2} and note that being of class
$\DDC^{k-1}$ is a local property; cf. the proofs of \PRO{DD-T} and \COR{DDB}).
\end{proof}

\begin{exm}{TC-dis}
Let us show that being of class $\TC^{\infty}$ is insufficient for the continuity of the extended
functional calculus. Let $\Omega = \{0\} \cup \{1/n\dd\ n \geqsl 2\} \cup \{1/n + 3^{-n}\dd\
n \geqsl 2\}$ and let $f\dd \Omega \to \CCC$ be defined as follows: $f(0) = f(1/n) = 0$ and
$f(1/n + 3^{-n}) = 2^{-n}$ for each $n > 0$. Notice that $\Omega$ is compact and
$\lim_{x\stackrel{\Omega}{\to}0} f(x)/x^k = 0$ for any $k$. This yields that $f \in
\TC^{\infty}(\Omega)$ (indeed, $\Omega' = \{0\}$ and $f^{(k)}(0) = 0$ for each $k$). However,
$f \notin \DDB^1(\Omega)$ because $\Delta(1/n + 3^{-n},1/n)f = (3/2)^n$. So, $f_{\op}\dd
\ddD_k(\Omega) \to \mmM_k(\CCC)$ is discontinuous for any $k > 1$ (by \THM{gen-cont}).\par
The above example shows also that the geometric shape of the set $\Omega$ matters when compering
different concepts of differentiability.
\end{exm}

\SECT{Aspects of uniform continuity}

When dealing with functional calculus for matrices, the adjective `uniform' may refer to two
different aspects of uniformity, namely:
\begin{itemize}
\item `uniform continuity' of $f_{\op}\dd \ddD_k(\Omega) \to \mmM_k(\CCC)$ as independent
   of the point at which the continuity is investigated; that is: for any $\epsi > 0$ there is
   $\delta > 0$ such that $\|f[X] - f[Y]\| \leqsl \epsi$ whenever $X, Y \in \ddD_k(\Omega)$ are such
   that $\|X - Y\| \leqsl \delta$ (here $k$ is fixed);
\item `uniform continuity' of $f_{\op}\dd \ddD_k(\Omega) \to \mmM_k(\CCC)$ as independent of $k$;
   for example: for any $\lambda \in \Omega'$ and each $\epsi > 0$ there is $\delta > 0$ such that
   $\|f[X] - f[\lambda I]\| \leqsl \epsi$ provided $X \in \ddD_n(\Omega)$ is such that
   $\|X - \lambda I\| \leqsl \delta$ (here $n \geqsl 1$ is arbitrary).
\end{itemize}

In this section we discuss both the above approaches. We begin with a useful

\begin{pro}{ext}
For two functions $f\dd \Omega \to \CCC$ and $F\dd \mmM^o_k(\tilde{\Omega}) \to \mmM_k(\CCC)$ where
$\Omega \subset \tilde{\Omega} \subset \bar{\Omega}$ \tfcae
\begin{enumerate}[\upshape(i)]
\item $F$ is continuous and $F(X) = f[X]$ for $X \in \ddD_k(\Omega)$;
\item $f$ extends to a function $\tilde{f}\dd \tilde{\Omega} \to \CCC$ of class $\DDC^{k-1}$ such
   that $F(X) = \tilde{f}[X]$ for any $X \in \ddD_k(\Omega)$.
\end{enumerate}
\end{pro}
\begin{proof}
We conclude from \PRO{strong-cont} that (i) follows from (ii). Now assume (i) holds and observe that
for any $\lambda \in \Omega$, $F(\lambda I) = f(\lambda) I$. So, it follows from the continuity
of $F$ and the density of $\Omega$ in $\tilde{\Omega}$ that for and $\lambda \in \tilde{\Omega}$,
$F(\lambda I)$ is a scalar multiple of the identity matrix $I$. This notice enables us to define
a function $\tilde{f}\dd \tilde{\Omega} \to \CCC$ by the formula $F(\lambda I) = \tilde{f}(\lambda)
I$. It is clear that $\tilde{f}$ extends $f$ and is continuous. Fix $X \in \ddD_k(\Omega)$ and take
an invertible $k \times k$ matrix $P$ such that $D := PXP^{-1}$ is diagonal. It is easily seen that
there is a sequence $D_1,D_2,\ldots$ of diagonal matrices belonging to $\ddD_k(\Omega)$ which
converges to $D$. It follows from the continuity of $\tilde{f}$ that $\lim_{n\to\infty}
\tilde{f}[D_n] = \tilde{f}[D]$ and thus
\begin{multline*}
\tilde{f}[X] = P^{-1} \tilde{f}[D] P = \lim_{n\to\infty} (P^{-1} f[D_n] P) = \lim_{n\to\infty}
f[P^{-1} D_n P]\\= F(P^{-1} D P) = F(X).
\end{multline*}
So, we see that $\tilde{f}_{\op}\dd \ddD_k(\tilde{\Omega}) \to \mmM_k(\CCC)$ extends to a continuous
function of $\mmM^o_k(\tilde{\Omega})$ into $\mmM_k(\CCC)$ (namely, $F$). Consequently, $\tilde{f}
\in \DDC^{k-1}(\tilde{\Omega})$, by \PRO{strong-cont}. Then also $\tilde{f}_{\op}\dd
\mmM^o_k(\tilde{\Omega}) \to \mmM_k(\CCC)$ is continuous and therefore $F(X) = \tilde{f}[X]$ for any
$X \in \mmM^o_k(\tilde{\Omega})$, since these two functions coincide on a dense set
(cf. \LEM{closure}).
\end{proof}

First we shall characterize those functions $f\dd \Omega \to \CCC$ for which $f_{\op}\dd
\ddD_2(\Omega) \to \mmM_2(\Omega)$. As the following result shows, this characterization
(in general) is somewhat strange.

\begin{lem}{2}
For a function $f\dd \Omega \to \CCC$ \tfcae
\begin{enumerate}[\upshape(i)]
\item $f_{\op}\dd \ddD_2(\Omega) \to \mmM_2(\CCC)$ is uniformly continuous;
\item $f$ is Lipschitz, the formula
   \begin{equation}\label{eqn:z+w}
   z+w \mapsto f(z)+f(w) \qquad (z,w) \in \Omega^{(2)}
   \end{equation}
   well defines a uniformly continuous function on $\{z+w\dd\ (z,w) \in \Omega^{(2)}\}$, and there
   exists $\delta > 0$ such that
   \begin{equation}\label{eqn:Delta-const}
   \Delta(z,w)f = \Delta(z',w')f \quad \textup{if } (z,w), (z',w') \in \Omega^{(2)}
   \textup{ and } |(z+w) - (z'+w')| \leqsl \delta.
   \end{equation}
\end{enumerate}
\end{lem}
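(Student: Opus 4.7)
My plan hinges on the Lagrange interpolation formula, which for $X \in \ddD_2(\Omega)$ with distinct eigenvalues $\lambda_1 \neq \lambda_2$ takes the symmetric form
\begin{equation*}
f[X] = \tfrac{1}{2}\bigl(f(\lambda_1)+f(\lambda_2)\bigr) I + \Delta(\lambda_1,\lambda_2)f \cdot \bigl(X - \tfrac{\lambda_1+\lambda_2}{2} I\bigr),
\end{equation*}
while $f[\lambda I] = f(\lambda) I$ in the scalar case. Thus $f[X]$ is governed by the trace $s := \lambda_1+\lambda_2$, the value $g(s) := f(\lambda_1)+f(\lambda_2)$, the divided difference $\Delta := \Delta(\lambda_1,\lambda_2)f$, and the centred matrix $X - \frac{s}{2}I$; both implications will be read off this formula.

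For (ii) $\Rightarrow$ (i), given $X, Y \in \ddD_2(\Omega)$ with $\|X-Y\|$ small, I split into cases by whether each matrix has distinct eigenvalues. When both do, $|s_X - s_Y| = |\tr(X-Y)| \leqsl 2\|X-Y\|$, so once $2\|X-Y\|$ lies below the $\delta$ from (c) we obtain $\Delta(X) = \Delta(Y) =: \Delta$; the formula then gives
\begin{equation*}
f[X] - f[Y] = \tfrac{1}{2}(g(s_X) - g(s_Y)) I + \Delta(X-Y) - \tfrac{\Delta}{2}(s_X - s_Y) I,
\end{equation*}
whose three summands are small by uniform continuity of $g$, by the Lipschitz bound $|\Delta| \leqsl L$, and by $|s_X - s_Y| \leqsl 2\|X-Y\|$, respectively. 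The case $X = \lambda I, Y = \mu I$ is immediate from Lipschitzness of $f$, while the mixed case is handled by expanding $f[Y]$ and invoking the spectral-radius estimate $|\lambda - \mu_i| \leqsl \|X - Y\|$ together with Lipschitzness.

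For (i) $\Rightarrow$ (ii), the main tool is an upper-triangular amplification. First, comparing $X_t := \bigl(\begin{smallmatrix} z & t \\ 0 & w \end{smallmatrix}\bigr)$ with $\Diag(z,w)$ for arbitrary $(z,w) \in \Omega^{(2)}$ and $t \in \CCC$ gives $\|X_t - \Diag(z,w)\| = |t|$ and $\|f[X_t] - f[\Diag(z,w)]\| = |t|\cdot|\Delta(z,w)f|$, so uniform continuity at any fixed $\epsi$ with modulus $\delta$ yields $|\Delta(z,w)f| \leqsl \epsi/\delta$ uniformly on $\Omega^{(2)}$, giving Lipschitzness of $f$. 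For condition (c), I use the two-parameter family
\begin{equation*}
X(s, d, u) := \tfrac{s}{2} I + \begin{pmatrix} 0 & u \\ d^2/u & 0 \end{pmatrix} \qquad (s/2 \pm d \in \Omega,\ d \neq 0,\ u \neq 0),
\end{equation*}
with eigenvalues $s/2 \pm d$ and image
\begin{equation*}
f[X(s,d,u)] = \tfrac{g(s,d)}{2} I + \Delta(s,d) \begin{pmatrix} 0 & u \\ d^2/u & 0 \end{pmatrix},
\end{equation*}
where $g(s,d) := f(s/2+d)+f(s/2-d)$ and $\Delta(s,d) := \Delta(s/2+d, s/2-d) f$. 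Then $\|X(s,d,u) - X(s',d',u)\| \leqsl |s-s'|/2 + |d^2 - d'^2|/|u|$, whereas the $(1,2)$-entry of $f[X(s,d,u)] - f[X(s',d',u)]$ equals $u(\Delta(s,d) - \Delta(s',d'))$; letting $|u| \to \infty$ with $|s - s'|/2$ under the uniform continuity modulus forces $\Delta(s,d) = \Delta(s',d')$. This simultaneously yields (c) and shows that $\Delta(s,d)$ depends only on $s$. When $s = s'$, the $(1,1)$-entry of the same difference is $\tfrac{1}{2}(g(s,d) - g(s,d'))$; letting $|u| \to \infty$ (so $\|X-Y\| \to 0$) forces $g(s,d) = g(s,d')$, so $g$ is well-defined on $\{z+w : (z,w) \in \Omega^{(2)}\}$. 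Rerunning the argument with $s \neq s'$ close and $|u|$ large delivers the uniform continuity of $g$.

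The main obstacle is extracting (c) in the trace-level form demanded by the statement rather than the weaker "individual entries close" form that falls out of directly comparing $\Diag$-matrices. The $u$-amplification resolves this, since $X(s,d,u)$ and $X(s',d,u)$ become close purely in the trace direction as $|u| \to \infty$, allowing arbitrary $(z,w), (z',w') \in \Omega^{(2)}$ with close traces (but possibly far-apart individual entries) to be linked by matrices arbitrarily close in operator norm inside $\ddD_2(\Omega)$.
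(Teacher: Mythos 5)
Your proposal is correct and takes essentially the same route as the paper's proof: the symmetric Lagrange form $f[X]=\tfrac12(f(\lambda_1)+f(\lambda_2))I+\Delta(\lambda_1,\lambda_2)f\cdot(X-\tfrac{\lambda_1+\lambda_2}{2}I)$ with the same three-case split for (ii)$\Rightarrow$(i), and, for (i)$\Rightarrow$(ii), amplified off-diagonal test matrices with equal or nearly equal traces whose corner and diagonal entries yield the Lipschitz bound, condition \eqref{eqn:Delta-const}, and the uniform continuity of $z+w\mapsto f(z)+f(w)$. The only difference is presentational: your zero-diagonal family $X(s,d,u)$ has the prescribed spectrum by construction and you argue directly with an $\epsi$--$\delta$ modulus, whereas the paper pins the spectrum of its perturbed matrices by solving for a compensating entry and proceeds by contradiction along sequences.
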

\begin{proof}
First assume $f_{\op}\dd \ddD_2(\Omega) \to \mmM_2(\CCC)$ is uniformly continuous. We want to prove
that all conditions of (ii) are fulfilled. To show that $f$ is Lipschitz, we need to check that
the function $(z,w) \ni \Omega^{(2)} \mapsto \Delta(z,w)f \in \CCC$ is bounded. To this end, fix
a sequence $(z_1,w_1),(z_2,w_2),\ldots$ of arbitrary elements of $\Omega^{(2)}$ and consider
the matrices $A_n = \begin{pmatrix}z_n & 0\\0 & w_n\end{pmatrix}$ and $A_n' = \begin{pmatrix}z_n &
0\\\epsi_n & w_n\end{pmatrix}$ where $\eeE = (\epsi_1,\epsi_2,\ldots)$ is a sequence convergent
to $0$. Then $A_n, A_n' \in \ddD_2(\Omega)$ and $\lim_{n\to\infty} \|A_n - A_n'\| = 0$. So,
we conclude from (i) that the sequence $\gamma(\eeE) = (\gamma_1(\eeE),\gamma_2(\eeE),\ldots)$
of bottom left corners of $f[A_n] - f[A_n']$ is bounded. But $\gamma_n(\eeE) = \Delta(z_n,w_n)
\epsi_n$. Since this sequence is bounded for any $\eeE$ convergent to $0$, we conclude that
$\sup_{n\geqsl1} |\Delta(z_n,w_n)f| < \infty$.\par
Now we claim that there is $\delta > 0$ such that \eqref{eqn:Delta-const} is satisfied. For if not,
there are two sequences $(z_1,w_1),(z_2,w_2),\ldots$ and $(z_1',w_1'),(z_2',w_2'),\ldots$
of elements of $\Omega^{(2)}$ such that
\begin{equation}\label{eqn:aux40}
\epsi_n := (z_n'+w_n') - (z_n+w_n) \to 0 \quad (n\to\infty)
\end{equation}
and $\kappa_n := \Delta(z_n,w_n)f - \Delta(z_n',w_n')f$ is nonzero for each $n$. Fix a sequence
$a_1,a_2,\ldots$ of nonzero complex numbers, put $b_n := \frac{z_n(w_n+\epsi_n) - z_n'w_n'}{a_n}$
and define matrices $A_n$ and $A_n'$ ($n > 0$) as follows: $A_n = \begin{pmatrix}z_n & 0\\a_n & w_n
\end{pmatrix}$, $A_n' = \begin{pmatrix}z_n & b_n\\a_n & w_n+\epsi_n\end{pmatrix}$. It is clear that
$A_n \in \ddD_2(\Omega)$ and $\spc(A_n) = \{z_n,w_n\}$. Observe that $\tr(A_n') = z_n' + w_n'$,
by \eqref{eqn:aux40} (`$\tr(X)$' is the trace of a matrix $X$), and $\det(A_n') = z_n' w_n'$,
by the definition of $b_n$. We conclude that $\spc(A_n') = \{z_n',w_n'\}$ and thus $A_n' \in
\ddD_2(\Omega)$ (since $z_n' \neq w_n'$). Notice that $\lim_{n\to\infty} \|A_n - A_n'\| = 0$ iff
\begin{equation}\label{eqn:aux41}
\lim_{n\to\infty} b_n = 0
\end{equation}
(thanks to \eqref{eqn:aux40}). Further, it follows from the definition of the extended functional
calculus that $f[A_n] = f(z_n) I + \Delta(z_n,w_n)f \cdot (A_n - z_n I)$ and $f[A_n'] = f(z_n') I
+ \Delta(z_n',w_n')f \cdot (A_n' - z_n' I)$ (where $I$ is the $2 \times 2$ unit matrix).
Consequently, if $\gamma_n$ denotes the bottom left corner of $f[A_n] - f[A_n']$, then $\gamma_n =
\kappa_n a_n$. Since $\kappa_n \neq 0$, we see that it is possible to find $a_n$'s such that
\eqref{eqn:aux41} holds and $\lim_{n\to\infty} |\gamma_n| = \infty$. But then, for such $a_n$'s,
$\lim_{n\to\infty} \|A_n - A_n'\| = 0$ and the sequence of $f[A_n] - f[A_n']$ ($n > 0$) is
unbounded, which contradicts (i). This finishes the proof of the last claim in (ii). Additionally,
observe that if we continue the above argument, i.e. starting from \eqref{eqn:aux40}, then choosing
$a_n$'s in a way such that \eqref{eqn:aux41} holds, and defining $A_n$ and $A_n'$ as above, we will
infer from (i) that $\lim_{n\to\infty} \|f[A_n] - f[A_n']\| = 0$. Consequently, $\lim_{n\to\infty}
(\tr(f[A_n]) - \tr(f[A_n'])) = 0$. But $\tr(f[A_n]) = f(z_n)+f(w_n)$ (since $\spc(f[A_n]) =
\{z_n,w_n\}$) and analogously $\tr(f[A_n']) = f(z_n')+f(w_n')$. This shows that if $(z_n,w_n),
(z_n',w_n') \in \Omega^{(2)}$ and $\lim_{n\to\infty} |(z_n+w_n) - (z_n',w_n')| = 0$, then
$\lim_{n\to\infty} |(f(z_n)+f(w_n)) - (f(z_n')+f(w_n'))| = 0$ as well. Equivalently, \eqref{eqn:z+w}
well defines a uniformly continuous function and the proof of (ii) is complete.\par
Now assume that all conditions of (ii) are fulfilled. Fix two sequences $A_1,A_2,\ldots$ and
$A_1',A_2',\ldots$ of members of $\ddD_2(\Omega)$ such that $\lim_{n\to\infty} \|A_n - A_n'\| = 0$.
Our aim is to show that $\lim_{n\to\infty} \|f[A_n] - f[A_n']\| = 0$ as well. To this end,
we consider three cases.\par
If both $A_n$ and $A_n'$ are scalar multiples of the unit matrix, then clearly $\|f[A_n] - f[A_n']\|
\leqsl L(f) \cdot \|A_n - A_n'\|$ where $L(f)$ is a Lipschitz constant for $f$. In that case
the assertion is therefore immediate.\par
If, for example, $A_n'$ is a scalar multiple of the unit matrix, say $A_n' = \gamma_n I$, and $A_n$
is not, then write $\spc(A_n) = \{z_n,w_n\}$ and note that $(z_n,w_n) \in \Omega^{(2)}$ and
$\spc(A_n - A_n') = \{z_n-\gamma_n,w_n-\gamma_n\}$. Consequently, $\lim_{n\to\infty} |z_n-\gamma_n|
= 0$, and $f[A_n] = f(z_n)I + \Delta(z_n,w_n)f \cdot (A_n-z_n I)$ and $f[A_n'] = f(\gamma_n)I$. So,
\begin{multline*}
\|f[A_n] - f[A_n']\| = \|(f(z_n) - f(\gamma_n))I + \Delta(z_n,w_n)f \cdot [(A_n-A_n')
+ (\gamma_n-z_n)I]\|\\ \leqsl 2 L(f) |z_n - \gamma_n| + L(f) \|A_n - A_n'\| \to 0 \qquad
(n\to\infty).
\end{multline*}\par
Finally, we assume that neither $A_n$ nor $A_n'$ is not a scalar multiple of the unit matrix. Then
$\spc(A_n) = \{z_n,w_n\}$ and $\spc(A_n') = (z_n',w_n')$ for some $(z_n,w_n), (z_n',w_n') \in
\Omega^{(2)}$. For simplicity, denote by $F$ the function defined by \eqref{eqn:z+w}. So, $F(z+w) =
f(z) + f(w)$ for any $(z,w) \in \Omega^{(2)}$. Consequently, $F(\tr(A_n)) = \tr(f[A_n])$ and
$F(\tr[A_n']) = \tr(f[A_n'])$. We conclude from the uniform continuity of $F$ that
\begin{equation}\label{eqn:trace}
\lim_{n\to\infty} |\tr(f[A_n]) - \tr(f[A_n'])| = 0
\end{equation}
(because $|\tr(A_n) - \tr(A_n')| \to 0$). Further, it follows from \eqref{eqn:Delta-const} that
$\Delta(z_n,w_n)f = \Delta(z_n',w_n')f$ for almost all $n$. Hence, we may assume that for all $n$,
$\Delta(z_n,w_n)f = \Delta(z_n',w_n')f =: \varrho_n$. Recall that $|\varrho_n| \leqsl L(f)$.
A straightforward calculation shows that $f(z_n) - \Delta(z_n,w_n)f \cdot z_n =
\frac{f(z_n)+f(w_n)}{2} - \Delta(z_n,w_n)f \cdot \frac{z_n+w_n}{2}$ (and similarly for
$(z_n',w_n')$). So, $f[A_n] = f(z_n) I + \Delta(z_n,w_n)f \cdot (A_n - z_n I) = \frac12 (\tr(f[A_n])
- \varrho_n \tr(A_n))I + \varrho_n A_n$ and a similar formula for $f[A_n']$. Finally, taking into
account \eqref{eqn:trace}, we obtain
\begin{multline*}
\|f[A_n]-f[A_n']\| \leqsl \frac12 |\tr(f[A_n])-\tr(f[A_n'])| + |\varrho_n| \cdot
\Bigl(\frac{|\tr(A_n-A_n')|}{2} + \|A_n-A_n'\|\Bigr)\\\leqsl \frac12 |\tr(f[A_n])-\tr(f[A_n'])|
+ 2 L(f) \|A_n-A_n'\| \to 0 \qquad (n\to\infty)
\end{multline*}
which finishes the proof.
\end{proof}

For simplicity, let us call a function $f\dd \Omega \to \CCC$ \textit{affine} (resp. \textit{affine
on a set $A \subset \Omega$}) iff there exist $a, b \in \CCC$ such that $f(z) = az + b$ for any
$z \in \Omega$ (resp. for any $z \in A$).\par
As consequences of \LEM{2}, we obtain the next three results.

\begin{pro}{2}
Let $\Omega$ be a subset of $\CCC$ such that $\card(\Omega) > 1$ and
\begin{equation}\label{eqn:-}
\inf \{|z-w|\dd\ (z,w) \in \Omega^{(2)}\} = 0.
\end{equation}
Then for any function $f\dd \Omega \to \CCC$ \tfcae
\begin{enumerate}[\upshape(i)]
\item $f_{\op}\dd \ddD_2(\Omega) \to \mmM_2(\CCC)$ is uniformly continuous;
\item there exist positive real numbers $\epsi$ and $M$ such that $\|f[X] - f[Y]\| \leqsl M$
   whenever $X, Y \in \ddD_2(\Omega)$ are such that $\|X - Y\| \leqsl \epsi$;
\item $f$ is affine.
\end{enumerate}
\end{pro}
\begin{proof}
We only need to prove that (iii) follows from (ii). Notice that the proof of \LEM{2} shows that
if (ii) is fulfilled, then there is $\delta > 0$ such that \eqref{eqn:Delta-const} holds (see
\LEM{2}). Then it follows from \eqref{eqn:-} that there are $z_0, z_1 \in \Omega$ with $0 <
|z_0 - z_1| < \delta$. Observe that \eqref{eqn:Delta-const} implies that
\begin{equation}\label{eqn:almaff}
\Delta(z_0,z)f = \Delta(z_1,z)f \qquad \textup{for any } z \in \Omega \setminus \{z_0,z_1\}.
\end{equation}
It is an elementary observation that \eqref{eqn:almaff} is equivalent to
$$f(z) = \Delta(z_0,z_1)f \cdot z + \frac{f(z_0)z_1 - f(z_1)z_0}{z_1 - z_0} \qquad (z \in \Omega)$$
which finishes the proof.
\end{proof}

\begin{cor}{2.1}
If $\Omega$ is a subset of $\CCC$ such that for some $\epsi > 0$, $|(z+w) - (z'+w')| \geqsl \epsi$
for any two distinct pairs $(z,w), (z',w') \in \Omega^{(2)}$, then for an arbitrary function $f\dd
\Omega \to \CCC$, $f_{\op}\dd \ddD_2(\Omega) \to \mmM_2(\CCC)$ is uniformly continuous iff $f$ is
Lipschitz.
\end{cor}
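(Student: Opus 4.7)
The plan is to deduce both directions directly from \LEM{2}. That lemma characterises uniform continuity of $f_{\op}\dd\ddD_2(\Omega)\to\mmM_2(\CCC)$ by a conjunction of three conditions: (a) $f$ is Lipschitz; (b) the rule $z+w\mapsto f(z)+f(w)$ well defines a uniformly continuous function on $S:=\{z+w\dd\ (z,w)\in\Omega^{(2)}\}$; and (c) there exists $\delta>0$ with $\Delta(z,w)f=\Delta(z',w')f$ whenever $(z,w),(z',w')\in\Omega^{(2)}$ and $|(z+w)-(z'+w')|\leqsl\delta$. The `only if' direction of the corollary is then immediate, since uniform continuity of $f_{\op}$ delivers (a), which is exactly the Lipschitz property of $f$.

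For the converse I would assume $f$ is Lipschitz and check that the separation hypothesis on $\Omega$ forces both (b) and (c) to hold automatically. The key structural remark is that, under the hypothesis (which must be read as asserting the separation for pairs with distinct unordered supports, since $(z,w)$ and $(w,z)$ always have equal sum), the set $S$ is uniformly discrete: two elements of $S$ are either equal or differ by at least $\epsi$. Whenever two pairs in $\Omega^{(2)}$ produce the same sum, they necessarily have the same unordered support, so the symmetry of $f(z)+f(w)$ guarantees that the rule in (b) is well defined. Uniform continuity of a function on a uniformly discrete set is automatic, which settles (b); and taking $\delta=\epsi/2$ in (c) forces $z+w=z'+w'$, hence $\{z,w\}=\{z',w'\}$, after which property (DD1) yields equality of the two divided differences.

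Feeding (a)--(c) into \LEM{2} then gives uniform continuity of $f_{\op}$, completing the equivalence. The only delicate point is the reading of `distinct pairs' in the hypothesis; once that is clarified, the entire argument reduces to a short bookkeeping check and I do not anticipate any serious obstacle.
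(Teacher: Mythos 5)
Your argument is correct and is exactly the route the paper intends: the paper gives no written proof, stating only that the corollary follows from \LEM{2} and leaving the details to the reader, and your verification of conditions (a)--(c) of that lemma (including the correct reading of ``distinct pairs'' as pairs with distinct unordered supports, which makes the sum set uniformly discrete) fills in precisely those details.
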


The proof of \COR{2.1} is based on \LEM{2} and we leave it to the reader.

\begin{cor}{2.2}
Let $\Omega$ be a subset of $\CCC$ which is symmetric with respect to some $\alpha \in \CCC$; that
is, $2\alpha - z \in \Omega$ for any $z \in \Omega$. If $f_{\op}\dd \ddD_2(\Omega) \to
\mmM_2(\Omega)$ is uniformly continuous \textup{(}where $f\dd \Omega \to \CCC$\textup{)}, then $f$
is affine on $\Omega \setminus \{\alpha\}$. In particular, if $\Omega$ is an additive subgroup
of $\CCC$, then $f_{\op}\dd \ddD_2(\Omega) \to \mmM_2(\CCC)$ is uniformly continuous iff $f$ is
affine.
\end{cor}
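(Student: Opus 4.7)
The plan is to invoke \LEM{2} and exploit the involution $z \mapsto 2\alpha - z$ on $\Omega$. Under the uniform-continuity hypothesis, \LEM{2} supplies two ingredients: the rule $F(z+w) = f(z) + f(w)$ well-defines a (uniformly continuous) function on $\{z+w : (z,w) \in \Omega^{(2)}\}$, and there exists $\delta > 0$ such that $\Delta(z,w)f = \Delta(z',w')f$ whenever $|(z+w) - (z'+w')| \leqsl \delta$.

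For any $z \in \Omega \setminus \{\alpha\}$ the pair $(z, 2\alpha - z)$ belongs to $\Omega^{(2)}$ (since $z \neq 2\alpha - z$) and has sum exactly $2\alpha$, independently of $z$. Consequently the first ingredient yields a constant $K$ with $f(z) + f(2\alpha - z) = K$, and the second yields a constant $c$ with $\Delta(z,2\alpha-z)f = c$, i.e.\ $f(z) - f(2\alpha-z) = 2c(z-\alpha)$. Adding these two identities gives $f(z) = cz + (K/2 - c\alpha)$ for every $z \in \Omega \setminus \{\alpha\}$, proving the first assertion.

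For the ``in particular'' clause, take $\alpha = 0$. If $\Omega = \{0\}$ the claim is vacuous; otherwise pick $z_0 \in \Omega \setminus \{0\}$. Since $\Omega$ is a subgroup, $-z_0$ and $2z_0$ also lie in $\Omega$, and torsion-freeness of $(\CCC,+)$ forces $2z_0 \neq -z_0$. Both pairs $(0,z_0)$ and $(2z_0, -z_0)$ therefore belong to $\Omega^{(2)}$ and share the sum $z_0$, so well-definedness of $F$ demands $f(0) + f(z_0) = f(2z_0) + f(-z_0)$. Substituting the affine formula $f(z) = cz + d$ valid on $\Omega \setminus \{0\}$ yields $f(0) = d$, hence $f$ is affine on all of $\Omega$. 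Conversely, any affine $f(z) = az + b$ induces $f_{\op}(X) = aX + bI$, which is $|a|$-Lipschitz and therefore uniformly continuous.

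The main obstacle is recognizing that neither the well-definedness of $F$ alone nor the $\Delta$-constancy alone determines $f$ on a symmetric pair: each supplies only one linear relation between $f(z)$ and $f(2\alpha - z)$, and it is their \emph{combination} that yields affinity. A secondary subtlety is closing the ``gap'' at $z = \alpha$ in the subgroup case, which is handled by exploiting a second decomposition $0 + z_0 = 2z_0 + (-z_0)$ of a common sum $z_0$ and invoking well-definedness of $F$ once more.
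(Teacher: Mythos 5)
Your proof is correct. The first assertion is handled exactly as in the paper: for $z \neq \alpha$ the pairs $(z,2\alpha-z)$ all share the sum $2\alpha$, so \LEM{2} makes $\Delta(z,2\alpha-z)f$ constant (via \eqref{eqn:Delta-const}, the sums differing by $0 \leqsl \delta$) and $f(z)+f(2\alpha-z)$ constant (via well-definedness of \eqref{eqn:z+w}), and adding the two relations gives the affine formula. Where you genuinely diverge is the ``in particular'' clause. The paper argues by a dichotomy: for a nontrivial subgroup either $0 \in \Omega'$, in which case it invokes \PRO{2} (the cluster-point criterion), or there is $x \in \Omega$ with $\tfrac12 x \notin \Omega$, in which case the first assertion applied with $\alpha = \tfrac12 x$ already covers all of $\Omega$ because the exceptional point is absent. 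You instead work with $\alpha = 0$ throughout and close the gap at $0$ directly, using the two decompositions $0 + z_0 = 2z_0 + (-z_0)$ of a common sum and the well-definedness of \eqref{eqn:z+w} once more to force $f(0) = d$; this needs only $2z_0, -z_0 \in \Omega$, which a subgroup supplies. Your route is more uniform (no case split, no appeal to \PRO{2}), and it isolates precisely the algebraic feature of subgroups that fails in the paper's \EXM{2} with $\Omega = \{k^3 : k \in \ZZZ\}$, where Fermat's theorem for exponent $3$ blocks any second decomposition involving $0$ and the value $f(0)$ indeed stays free; the paper's route, on the other hand, reuses existing machinery and covers the non-discrete case by a softer density argument. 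You also spell out the trivial converse ($f$ affine gives $f[X] = aX + bI$, hence Lipschitz), which the paper leaves implicit. Your argument is complete as written.
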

\begin{proof}
Assume $f_{\op}\dd \ddD_2(\Omega) \to \mmM_2(\CCC)$ is uniformly continuous. Fix $w \in \Omega
\setminus \{\alpha\}$ and put $m = \Delta(w,2\alpha - w)f$. Note that for any $z \in \Omega
\setminus \{\alpha\}$ we have $(z,2\alpha - z), (w,2\alpha - w) \in \Omega^{(2)}$ and $z + (2\alpha
- z) = w + (2\alpha - w)$. Consequently, $\Delta(z,2\alpha - z)f = m$, thanks
to \eqref{eqn:Delta-const}, and $f(z) + f(2\alpha - z) = f(w) + f(2\alpha - w)$, by \eqref{eqn:z+w}
(see \LEM{2}). We infer from these two equalities that $f(z) = m(z - \alpha) +
\frac{f(w)+f(2\alpha-w)}{2}$ and hence $f$ is affine on $\Omega \setminus \{\alpha\}$.\par
Now if $\Omega$ is a nontrivial additive group, then $0 \in \Omega'$ or there is $x \in \Omega$ such
that $\frac12 x \notin \Omega$. In the first case the assertion follows from \PRO{2}, while
in the second from the above prove (since $\Omega$, being a group, is symmetric with respect
to $\frac12 x$ and $\Omega \setminus \{\frac12 x\} = \Omega$).
\end{proof}

\begin{exm}{2}
As the following somewhat strange example shows, the assertion of \COR{2.2} cannot be strengthened
in general. Let $\Omega = \{k^3\dd\ k \in \ZZZ\}$ and $f\dd \Omega \to \CCC$ be arbitrary. We claim
that $f_{\op}\dd \Omega \to \CCC$ is uniformly continuous iff $f$ is affine on $\Omega \setminus
\{0\}$ (so, $f(0)$ may be chosen independently of other values of $f$). The necessity of the latter
condition follows from \COR{2.2}. To see its sufficiency, we involve \LEM{2}. It is easily seen that
if $f$ is affine on $\Omega \setminus \{0\}$, then automatically $f$ is Lipschitz (on $\Omega$).
So, taking into account \LEM{2}, it is enough to check that if $(k,l), (m,0) \in \Omega^{(2)}$ and
$k + l = m + 0$, then either $k$ or $l$ is zero. But this simply follows from the last Fermat
theorem (for exponent $3$).
\end{exm}

The above results show that the characterization of those functions $f\dd \Omega \to \CCC$ for which
$f_{\op}\dd \ddD_k(\Omega) \to \mmM_k(\CCC)$ is uniformly continuous for $k = 2$ does depend
on the geometry of $\Omega$. The situation changes when $k \geqsl 3$, as shown by \PRO{uniform},
the proof of which we now turn to.

\begin{proof}[Proof of \PRO{uniform}]
Since the case when $k = 2$ and $\bar{\Omega}' \neq \varempty$ follows from \PRO{2}, we may and do
assume that $k \geqsl 3$. We only need to explain why (iii) follows from (i). First of all observe
that $f$ is affine iff $\Delta(x,y,z)f = 0$ for any $(x,y,z) \in \Omega^{(3)}$. For a fixed $(x,y,z)
\in \Omega^{(3)}$ denote by $A(w)$ (where $w \in \CCC$) the $k \times k$ matrix $[a_{p,q}]$ such
that $a_{1,1} = x$, $a_{2,2} = y$, $a_{p,p} = z$ for $p > 2$, $a_{2,1} = a_{3,2} = w$ and $a_{p,q} =
0$ otherwise. It is easy to check that $A(w) \in \ddD_2(\Omega)$, $\spc(A(w)) = \{x,y,z\}$ and
the function $\CCC \ni x \mapsto A(w) \in \mmM_k(\CCC)$ is uniformly continuous. Consequently,
if (i) is satisfied and $b(w)$ denotes the entry of $f[A(w)]$ which lies in its third row and first
column, then there exists $\delta > 0$ such that $|b(w) - b(w')| \leqsl M$ whenever $|w - w'| \leqsl
\delta$ (where $M$ is as in (i)). But $b(w) = \Delta(x,y,z)f \cdot w^2$ and hence the function
$\RRR \ni w \mapsto |b(w) - b(w+\delta)| \in \RRR$ is unbounded unless $\Delta(x,y,z) = 0$. This
finishes the proof.
\end{proof}

\PRO{uniform} says that the problem of characterizing `operator Lipschitz' or `operator H\"{o}lder'
functions, which is very interesting and highly nontrivial for functional calculus for selfadjoint
(or, more generally, normal) matrices, becomes trivial for extended functional calculus. It seems
to be a valid supposition that the main reason for this is that for any compact set $L$ in $\CCC$
having more than one point and each $k > 1$ the closure of $\ddD_k(L)$ is unbounded (and hence
noncompact). Taking this into account, it seems to be reasonable to make some restrictions when
studying uniform continuity of the extended functional calculus. Let us now state two results
in this direction.

\begin{pro}{uni1}
For a function $f\dd \Omega \to \CCC$ and $k \geqsl 2$ \tfcae
\begin{enumerate}[\upshape(i)]
\item there is $\epsi > 0$ such that for any $X_0 \in \ddD_k(\Omega)$, $f_{\op}\dd \ddD_k(\Omega)
   \to \mmM_k(\CCC)$ is uniformly continuous on the set $\{X \in \ddD_k(\Omega)\dd\ \|X - X_0\|
   < \epsi\}$;
\item $f_{\op}\dd \ddD_k(\Omega) \to \mmM_k(\CCC)$ is uniformly continuous on every bounded subset
   of $\ddD_k(\Omega)$;
\item $f$ extends to a function $F \in \DDC^{k-1}(\bar{\Omega})$.
\end{enumerate}
Moreover, if condition \textup{(iii)} holds and $F$ is as there, then the function $F_{\op}\dd
\mmM^o_k(\bar{\Omega}) \to \mmM_k(\CCC)$ is uniformly continuous on every bounded subset
of $\mmM^o_k(\bar{\Omega})$.
\end{pro}
\begin{proof}
Since the closure (in $\mmM_k(\CCC)$) of every bounded subset of $\mmM_k(\CCC)$ is compact, it is
easy to check that (ii) follows from (i). Further, if (ii) is fulfilled, it follows from
the completeness of $\mmM_k(\CCC)$ that $f_{\op}\dd \ddD_k(\Omega) \to \mmM_k(\CCC)$ extends
to a continuous function $G\dd \mmM^o_k(\bar{\Omega}) \to \mmM_k(\CCC)$ (here \LEM{closure} is
applied). Now \PRO{ext} shows that (iii) is satisfied. Finally, assume $F$ is as in (iii). Then
$F_{\op}\dd \mmM^o_k(\bar{\Omega}) \to \mmM_k(\CCC)$ is continous, by \PRO{strong-cont}. What is
more, $\mmM^o_k(\bar{\Omega})$ is closed in $\mmM_k(\CCC)$ (cf. \LEM{closure}) and thus for every
bounded set $L \subset \mmM^o_k(\bar{\Omega})$, $F_{\op}$ is uniformly continuous on $L$ (since
the closure of $L$ in $\mmM^o_k(\bar{\Omega})$ is compact). This proves (i) and completes the proof.
\end{proof}

\begin{pro}{uni2}
For a function $f\dd \Omega \to \CCC$ and $k \geqsl 2$ \tfcae
\begin{enumerate}[\upshape(i)]
\item for any $z \in \Omega$, there is a relatively open \textup{(}in $\ddD_k(\Omega)$\textup{)}
   neighbourhood of $z I$ on which $f_{\op}\dd \ddD_k(\Omega) \to \mmM_k(\CCC)$ is uniformly
   continuous;
\item every point of $\ddD_k(\Omega)$ has a relatively open \textup{(}in $\ddD_k(\Omega)$\textup{)}
   neighbourhood on which the function $f_{\op}\dd \ddD_k(\Omega) \to \mmM_k(\CCC)$ is uniformly
   continuous;
\item $f$ extends to a function $\tilde{f} \in \DDC^{k-1}(\tilde{\Omega})$ for some locally compact
   set $\tilde{\Omega}$ with $\Omega \subset \tilde{\Omega} \subset \bar{\Omega}$.
\end{enumerate}
Moreover, if condition \textup{(iii)} holds and $\tilde{f}$ is as there, then every point
of $\mmM^o_k(\tilde{\Omega})$ has a relatively open
\textup{(}in $\mmM^o_k(\tilde{\Omega})$\textup{)} neighbourhood on which the function
$\tilde{f}_{\op}\dd \mmM^o_k(\tilde{\Omega}) \to \mmM_k(\CCC)$ is uniformly continuous.
\end{pro}
\begin{proof}
First assume (iii) holds and let $\tilde{f}$ be as there. Then, according to \PRO{strong-cont},
$\tilde{f}_{\op}\dd \mmM^o_k(\tilde{\Omega}) \to \mmM_k(\CCC)$ is continuous. Since $\tilde{\Omega}$
is locally compact, there is an open (in $\CCC$) set $U$ such that $\tilde{\Omega} = U \cap
\bar{\Omega}$. Observe that then $\mmM^o_k(\tilde{\Omega}) = \mmM_k(U) \cap \mmM^o_k(\bar{\Omega})$.
But $\mmM_k(U)$ is open in $\mmM_k(\CCC)$ (e.g. by \PRO{conv-spc}), while $\mmM^o_k(\bar{\Omega})$
is closed (\LEM{closure}). So, $\mmM^o_k(\tilde{\Omega})$, being the intersection of an open and
a closed set in the locally compact space $\mmM_k(\CCC)$, is locally compact as well. Consequently,
$\tilde{f}_{\op}$ is locally uniformly continuous, which proves (ii) and the additional claim
of the proposition. Of course, (i) obviously follows from (ii).\par
Now assume (i) is fulfilled. For each $z \in \Omega$ let $\epsi_z > 0$ be such that $f_{\op}$ is
uniformly continuous on $D_z := B_z \cap \ddD_k(\Omega)$ where $B_z := \{X \in \mmM_k(\CCC)\dd\ \|X
- zI\| < \epsi_z\}$. Let $G_z\dd B_z \cap \mmM^o_k(\bar{\Omega}) \to \mmM_k(\CCC)$ denote
the (unique) continuous extension of $f_{\op}\bigr|_{D_z}$. It then follows that the union of all
$G_z$'s is a well defined continuous function on $B \cap \mmM^o_k(\bar{\Omega})$ where $B =
\bigcup_{z\in\Omega} B_z$. Further, let $U = \bigcup_{z\in\Omega} B(z,\epsi_z)\ (\subset \CCC)$ (see
\eqref{eqn:ball}) and $\tilde{\Omega} = U \cap \bar{\Omega}$. Notice that $\tilde{\Omega}$ is
locally compact and $\Omega \subset \tilde{\Omega}$. It is easily seen that $\lambda I \in B$ for
any $\lambda \in \tilde{\Omega}$. Moreover, since $G(\lambda I) =
\lim_{z\stackrel{\Omega}{\to}\lambda} f[z I]$ (by the continuity of $G$), we see that $G(\lambda I)
= w I$ for some $w \in \CCC$. We define a function $\tilde{f}\dd \tilde{\Omega} \to \CCC$
by the rule: $G(\lambda I) = \tilde{f}(\lambda) I\ (\lambda \in \tilde{\Omega})$. It is easily seen
that $\tilde{f}$ extends $f$. Now we shall show that $\tilde{f} \in \DDC^{k-1}(\tilde{\Omega})$,
which will finish the proof.\par
It is clear that $\tilde{f}$ is continuous. We shall apply this fact in the sequel. Let $\lambda \in
\tilde{\Omega}$ and matrices $X_1,X_2,\ldots \in \ddD_k(\tilde{\Omega})$ converge to $\lambda I$.
Let $z \in \Omega$ be such that $\lambda \in B(z,\epsi_z)$. Then $\lambda I \in B_z$ and hence also
$X_n \in B_z$ for almost all $n$. We may assume $X_n \in B_z$ for all $n$. It then follows from
the continuity of $\tilde{f}$ and the density of $\Omega$ in $\tilde{\Omega}$ that for any $n$ one
may find a matrix $X_n' \in B_z \cap \ddD_k(\Omega)$ such that $\|X_n' - X_n\| \leqsl \frac1n$ and
\begin{equation}\label{eqn:aux44}
\|\tilde{f}[X_n'] - \tilde{f}[X_n]\| \leqsl \frac1n
\end{equation}
(use the diagonalizability argument; cf. the proof of \PRO{ext}). But then $\lim_{n\to\infty}
\|X_n' - \lambda I\| = 0$ and hence $\tilde{f}[X_n'] = f[X_n'] = G(X_n') \to G(\lambda I) =
\tilde{f}[\lambda I]$ as $n\to\infty$, which, combined with \eqref{eqn:aux44}, yields
$\lim_{n\to\infty} \tilde{f}[X_n] = \tilde{f}[\lambda I]$. We now infer from \THM{gen-cont} that
$\tilde{f} \in \DDB^{k-1}(\tilde{\Omega})$ and $\tilde{f}_{\op}\dd \mmM^o_k(\tilde{\Omega})
\setminus \zzZ_k \to \mmM_k(\CCC)$ is continuous. In particular,
\begin{equation}\label{eqn:aux45}
\tilde{f}[X] = G(X) \quad \textup{for any } X \in B \cap \mmM^o_k(\tilde{\Omega}) \setminus \zzZ_k.
\end{equation}
To ensure that $\tilde{f}$ is of class $\DDC^{k-1}$, it is enough to check that $\tilde{f}_{\op}\dd
\ddD_k(\tilde{\Omega}) \to \mmM_k(\CCC)$ extends to a continuous function
of $\mmM^o_k(\tilde{\Omega})$ into $\mmM_k(\CCC)$ (according to \PRO{strong-cont}). Equivalently,
we only need to show that if $X_1,X_2,\ldots$ are arbitrary matrices belonging
to $\ddD_k(\tilde{\Omega})$ which converge to some $X \in \mmM^o_k(\tilde{\Omega})$, then
the sequence $\tilde{f}[X_1],\tilde{f}[X_2],\ldots$ has a limit in $\mmM_k(\CCC)$ (such a criterion
for extendability to a continuous function is a general topological fact in metric spaces; see also
the last part of the proof of implication `(iv)$\implies$(iii)' in \THM{gen-cont}). To this end,
assume $X_1,X_2,\ldots \in \ddD_k(\tilde{\Omega})$ converge to $X \in \mmM^o_k(\tilde{\Omega})$.
If $X \notin \zzZ_k$, then $\lim_{n\to\infty} \tilde{f}[X_n] = \tilde{f}[X]$, thanks to the previous
part of the proof. So, we may assume $X \in \zzZ_k$. Denote by $\lambda \in \tilde{\Omega}$
the unique element of $\spc(X)$. Let $z \in \Omega$ be such that $\lambda \in B(z,\epsi_z)$. Then
also $\lambda I \in B_z$. Denote by $A$ the $k \times k$ matrix $[a_{p,q}]$ such that $a_{j+1,j} =
1$ ($j=1,\ldots,k-1$) and $a_{p,q} = 0$ otherwise. Since $B_z$ is open in $\mmM_k(\CCC)$, we see
there is $\delta > 0$ such that $X' := \lambda I + \delta A \in B_z$. Notice that there is
an invertible matrix $P \in \mmM_k(\CCC)$ such that $P X P^{-1} = X'$ (because $X \in \zzZ_k$ and
$\lambda \in \spc(X)$). Then the matrices $X_n' := P X_n P^{-1}$ converge to $X'$ and, consequently,
belong to $B_z$ for almost all $n$. So, $X_n' \in B \cap \mmM^o_k(\tilde{\Omega}) \setminus \zzZ_k$,
$X' \in B \cap \mmM^o_k(\tilde{\Omega})$ and therefore, by \eqref{eqn:aux45} and the continuity
of $G$, $\lim_{n\to\infty} \tilde{f}[X_n'] = G(X')$. Finally, we conclude that $\lim_{n\to\infty}
\tilde{f}[X_n] = \lim_{n\to\infty} P^{-1} \tilde{f}[X_n'] P = P^{-1} G(X') P$ and we are done.
\end{proof}

Now we turn to the second approach to uniform continuity---namely, when the `quality' of continuity
(of the extended functional calculus) is, in a sense, independent of the degree of diagonalizable
matrices. The main result in this topic is \THM{holo}, which we now want to prove.

\begin{proof}[Proof of \THM{holo}]
Of course, (i) obviously follows from (ii). It is also not so difficult, involving holomorphic
functional calculus for bounded Hilbert space operators, that (ii) is implied by (iii). Indeed,
denoting by $I$ the identity operator on $\ell_2$ (= separable infinite-dimensional complex Hilbert
space), it follows from the properties of the holomorphic functional calculus that for any
holomorphic function $g\dd D \to \CCC$ (where $D$ is an open neighbourhood of $\lambda$ in $\CCC$)
and each $\epsi > 0$ there is $\delta > 0$ such that for every bounded operator $T$ on $\ell_2$ with
$\|T - \lambda I\| \leqsl \delta$ one has $\spc(T) \subset D$ and $\|f[T] - f[\lambda I]\| \leqsl
\epsi$ (see also the proof of \PRO{holo} below). We leave it as an exercise that the assertion
of (ii) now easily follows.\par
Now assume (i) is fulfilled. We infer from \THM{gen-cont} and \PRO{B-C} that $f \in
\DDC^{\infty}(\Omega)$. We want to show (iii). To this end, fix $z \in \Omega'$ and take $\epsi_z >
0$ and $M_z > 0$ such that $\|f[X]\| \leqsl M_z$ whenever $X \in \ddD_n(\Omega)$ is such that
$\|X - z I_n\| \leqsl 2 \epsi_z$ (where $n$ is arbitrary)---see (i). We claim that for each
$n \geqsl 1$,
\begin{align}\label{eqn:D-b}
|\Delta(\lambda_1,\ldots,\lambda_n)f| \leqsl \frac{M_z}{\epsi_z^{n-1}} \qquad & \textup{if }
(\lambda_1,\ldots,\lambda_n) \in \Omega^{(n)} \textup{ and}\\& |\lambda_j - z| \leqsl \epsi_z\
(j=1,\ldots,n).\notag
\end{align}
Observe that the above inequality is immediate for $n = 1$. To prove \eqref{eqn:D-b} for $n > 1$,
denote by $A$ the $n \times n$ matrix $[a_{p,q}]$ with $a_{j+1,j} = 1$ ($j=1,\ldots,n-1$) and
$a_{p,q} = 0$ otherwise, and note that if $\lambda_1,\ldots,\lambda_n$ are as in \eqref{eqn:D-b},
then $X := \epsi_z A + \Diag(\lambda_1,\ldots,\lambda_n)$ belongs to $\ddD_n(\Omega)$, $\|X - z
I_n\| \leqsl 2 \epsi_z$ and consequently $\|f[X]\| \leqsl M_z$. So, the bottom left corner of $X$,
say $b$, satisfies the inequality $|b| \leqsl M_z$. But $b = \Delta(\lambda_1,\ldots,\lambda_n)f
\cdot \epsi_z^{n-1}$, which yields \eqref{eqn:D-b}.\par
Since $f \in \DDC^{\infty}(\Omega)$, there are continuous functions $F_n\dd \Omega^{[n]} \to \CCC$
($n=1,2,\ldots$) such that $F_n(z_1,\ldots,z_n) = \Delta(z_1,\ldots,z_n)f$ for any $(z_1,\ldots,z_n)
\in \Omega^{(n)}$ (see \PRO{DDC}). Further, it follows from \PRO{DD-T} and its proof that for any
$w \in \Omega$, $z \in \Omega'$ and $n \geqsl 1$:
$$f(w) = \sum_{k=0}^n \frac{f^{(k)}(z)}{k!}(w-z)^k + (w-z)^n [F_{n+1}(z,\ldots,z,w)
- F_{n+1}(z,\ldots,z,z)].$$
The continuity of $F_n$ and the density of $[B(z,\epsi_z)]^n \cap \Omega^{(n)}$ in $B[(z,\epsi_z)]^n
\cap \Omega^{[n]}$ combined with \eqref{eqn:D-b} yield that $|F_n(\lambda_1,\ldots,\lambda_n)|
\leqsl M_z/\epsi_z^{n-1}$ for any $(\lambda_1,\ldots,\lambda_n) \in [B(z,\epsi_z)]^n \cap
\Omega^{[n]}$ and $z \in \Omega'$. So, Taylor's expansion of $f$, stated above, may be estimated
as follows:
$$|f(w) - \sum_{k=0}^n \frac{f^{(k)}(z)}{k!}(w-z)^k| \leqsl 2M_z
\Bigl(\frac{|w - z|}{\epsi_z}\Bigr)^n$$
whenever $z \in \Omega'$, $w \in \Omega$ and $|z - w| < \epsi_z$. Consequently,
\begin{equation}\label{eqn:holo}
f(w) = \sum_{n=0}^{\infty} \frac{f^{(n)}(z)}{n!} (w-z)^n \qquad (z \in \Omega',\ w \in \Omega,\
|z - w| < \epsi_z).
\end{equation}
For any $z \in \Omega'$ fix $w_z \in \Omega$ such that $\delta_z := |z - w_z| \in (0,\frac12
\epsi_z)$. Since the series appearing in \eqref{eqn:holo} converges for $w = w_z$, we infer that
the assignment $w \mapsto \sum_{n=0}^{\infty} \frac{f^{(n)}(z)}{n!} (w - z)^n$ well defines
a holomorphic function $f_z\dd B(z,\delta_z) \to \CCC$ (which extends the restriction of $f$
to $B(z,\delta_z) \cap \Omega$). Put $U = \bigcup_{z\in\Omega'} B(z,\delta_z)$. We claim that
all the functions $f_z$ ($z \in \Omega'$) agree. Indeed, if the domains of $f_{z_1}$ and $f_{z_2}$
(for some $z_1, z_2 \in \Omega'$), that is---the balls $B(z_1,\delta_{z_1})$ and
$B(z_2,\delta_{z_2})$, meet, then $|z_1 - z_2| < \delta_{z_1} + \delta_{z_2}$. Without loss
of generality, we may assume $\delta_{z_2} \leqsl \delta_{z_1}$. But then $|z_1 - z_2| <
2 \delta_{z_1} < \epsi_{z_1}$ and consequently $z_2 \in B(z_1,\epsi_{z_1}) \cap \Omega' \cap
B(z_2,\epsi_{z_2})$. So, the set $A := B(z_1,\epsi_{z_1}) \cap B(z_2,\epsi_{z_2}) \cap \Omega$ has
a non-isolated point (namely, $z_2$) and $f_{z_1}\bigr|_A = f\bigr|_A = f_{z_2}\bigr|_A$, which
implies that $f_{z_1}$ and $f_{z_2}$ coincide on the whole intersection of their domains
(by the identity principle).\par
The property established above enables us to define a holomorphic function $F\dd U \to \CCC$
by the rule: $F(w) = f_z(w)$ for $z \in \Omega'$ and $w \in B(z,\delta_z)$. Notice that $F$ extends
$f\bigr|_{\Omega \cap U}$. Hence, if $\Omega \subset U$, the proof is complete. Now assume $\Omega
\not\subset U$ and write $U \setminus \Omega = \{\lambda_n\dd\ 1 \leqsl n < N\}$ where $N \in \{1,2,
\ldots,\infty\}$ and $\lambda_n$'s are different (recall that $U \setminus \Omega$ is finite
or countable since $\Omega' \subset U$). Since $\lambda_n \notin \Omega'$, there is a real constant
$\rho_n \in (0,\frac1n)$ such that $\Omega \cap B(\lambda_n,\rho_n) = \{\lambda_n\}$ ($n < N$).
Notice that the sets
$$U_0 := U \setminus \overline{\bigcup_{n<N} B(\lambda_n,\frac12\rho_n)},\
B(\lambda_1,\frac12\rho_1),\ B(\lambda_2,\frac12 \rho_2),\ \ldots$$
are open and pairwise disjoint and hence we may properly define a holomorphic function $g\dd D \to
\CCC$ on their union $D$ by the rule: $g(w) = F(w)$ for $w \in U_0$ and $g(w) = f(\lambda_n)$ for
$w \in B(\lambda_n,\frac12\rho_n)$ ($n < N$). It is clear that $g$ extends
$f\bigr|_{\Omega \cap D}$. So, to finish the proof, it suffices to check that $\Omega \subset D$.
Suppose, on the contrary, that there is $z \in \Omega$ which does not belong to $D$. Then $z \neq
\lambda_n$ ($n < N$) and thus $z \in U$. We conclude from the fact that $z \in U \setminus D$ that
$z$ belongs to the closure of $\bigcup_{n<N} B(\lambda_n,\frac12\rho_n)$. So, there are sequences
$(n_k)_{k=1}^{\infty}$ and $(z_k)_{k=1}^{\infty}$ of natural and complex numbers (respectively)
such that $z_k \in B(\lambda_{n_k},\frac12\rho_{n_k})$ and $\lim_{k\to\infty} z_k = z$. Passing
to a subsequence, we may assume that either $n_k = m$ for all $k$ or $\lim_{k\to\infty} n_k =
\infty$. In the first case we obtain that $|z - \lambda_m| \leqsl \frac12 \rho_m$ and thus $z \in
B(\lambda_m,\rho_m) \cap \Omega$, which is impossible (since $z \neq \lambda_m$). In the second case
we infer that $\lim_{k\to\infty} \lambda_{n_k} = z\ (\in U)$ because $|z_{n_k} - \lambda_{n_k}| <
\rho_{n_k} < \frac{1}{n_k}$. So, there is $l$ such that $\lambda_{n_l} \in U$, which is also
impossible, and we are done.
\end{proof}

\SECT{Functional calculus in infinite dimension}

Now we would like to introduce and study extended functional calculus for bounded operators
in infinite-dimensional (complex) Hilbert spaces. To this end, let us fix the notation. Whenever
$\HHh$ is a Hilbert space, by $\BBb(\HHh)$ we denote the unital ($C^*$-)algebra of all bounded
linear operators acting on $\HHh$. The spectrum of $T \in \BBb(\HHh)$ is the set $\spc(T)$ of all
scalars $\lambda \in \CCC$ such that the operator $T - \lambda I$ is noninvertible in $\BBb(\HHh)$
(here $I$ denotes the identity operator on $\HHh$). Two bounded operators $S$ and $T$ are said to be
\textit{similar} if there is an invertible operator $G \in \BBb(\HHh)$ such that $T = G S G^{-1}$.

\begin{dfn}{diag-scal}
Let $\HHh$ be a Hilbert space with a fixed orthonormal basis $\bbB = \{e_j\}_{j \in J}$. An operator
$T \in \BBb(\HHh)$ is said to be \textit{diagonal} (\textit{with respect to $\bbB$}) iff $T e_j \in
\CCC \cdot e_j$ for any $j \in J$. $T$ is called \textit{diagonalizable} iff it is similar
to diagonal. Finally, $T$ is \textit{scalar} (in the sense of Dunford and Schwartz \cite{d-s}) if it
is similar to a normal operator.\par
The sets of all (bounded) diagonalizable and scalar operators on $\HHh$ are denoted by,
respectively, $\ddD_{\HHh}$ and $\ssS_{\HHh}$.
\end{dfn}

The original definition of a scalar operator due to Dunford and Schwartz \cite{d-s} differs from
ours. However, they are equivalent (which was also established by Dunford and Schwartz, see
\cite[Theorem~XV.6.4]{d-s}). It is easily seen that the definition of a diagonal operator does
depend on the choice of an orthonormal basis, while the notion of a diagonalizable operator does
not. That is, if $T$ is similar to a diagonal operator with respect to an orthonormal basis $\bbB$,
then $T$ is similar to a diagonal operator with respect to any other orthonormal basis as well.
Finally, since diagonal operators are normal, we see that diagonalizable operators are scalar. So,
$\ddD_{\HHh} \subset \ssS_{\HHh}$. Our aim is to define extended functional calculus for scalar
operators. To this end, we recall that for every normal operator $N \in \BBb(\HHh)$ there exists
a unique spectral measure $E\dd \Bb(\spc(N)) \to \BBb(\HHh)$ (where $\Bb(A)$ is the $\sigma$-algebra
of all Borel subsets of a compact set $A \subset \CCC$) such that $N = \int_{\spc(N)} z E(\dint{z})$
(this fact is known as the \textit{spectral theorem}). Then, for every bounded Borel function $f\dd
\spc(N) \to \CCC$ one defines $f[N]$ as the integral $\int_{\spc(N)} f(z) E(\dint{z})$. With the aid
of the Fuglede-Putnam theorem \cite{fug,put} (see also \cite[Theorem~IX.6.7]{con} for a simpler
proof) one easily proves the following result on intertwining between normal operators.

\begin{lem}{F-P}
Let $N, M \in \BBb(\HHh)$ be normal and $P, G \in \BBb(\HHh)$ be invertible operators. If $P N
P^{-1} = G M G^{-1}$, then $\spc(N) = \spc(M)$ and
$$P f[N] P^{-1} = G f[M] G^{-1}$$
for any bounded Borel function $f\dd \spc(N) \to \CCC$.
\end{lem}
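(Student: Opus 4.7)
The plan is to reduce the statement to an intertwining relation and then promote it from polynomials to all bounded Borel functions via the Fuglede--Putnam theorem. Set $T := PNP^{-1} = GMG^{-1}$ and $A := G^{-1}P$; the latter is bounded and invertible. Rewriting the hypothesis as $PN = GMG^{-1}P$ and multiplying on the left by $G^{-1}$ gives the intertwining relation $AN = MA$. The first claim $\spc(N) = \spc(M)$ is immediate: the spectrum is invariant under similarity and both $N$ and $M$ are similar to the common operator $T$, so $\spc(N) = \spc(T) = \spc(M)$; call this common set $K$.

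For the functional identity, I would first apply the Fuglede--Putnam theorem to $AN = MA$ (the normality of $N$ and $M$ is exactly what legitimizes this step) to obtain $AN^* = M^*A$ as well. Since $N$ commutes with $N^*$ and similarly $M$ with $M^*$, a straightforward induction on degree then yields $A\,p(N,N^*) = p(M,M^*)\,A$ for every polynomial $p$ in two commuting indeterminates. Once $Af[N] = f[M]A$ is established for all bounded Borel $f\dd K \to \CCC$, the desired equality $Pf[N]P^{-1} = Gf[M]G^{-1}$ follows by running the initial algebraic manipulation backward, using $GA = P$.

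Extending from polynomials to all $f \in C(K)$ is the Stone--Weierstrass step: polynomials in $z$ and $\bar z$ are uniformly dense in $C(K)$, and the continuous functional calculi $f \mapsto f[N]$ and $f \mapsto f[M]$ are $*$-isometric, hence norm-continuous, so the intertwining identity passes to uniform limits. To reach arbitrary bounded Borel $f$, I would fix vectors $x, y \in \HHh$ and consider the two complex regular Borel measures on $K$ defined by $\mu(B) := \langle AE_N(B)x, y\rangle$ and $\nu(B) := \langle E_M(B)Ax, y\rangle$, where $E_N, E_M$ are the spectral measures of $N, M$. By the continuous case just proved, $\int g \dint{\mu} = \int g \dint{\nu}$ for every $g \in C(K)$, so uniqueness in the Riesz representation theorem for complex measures forces $\mu = \nu$ on all Borel sets; integrating a bounded Borel $f$ against both sides then gives $\langle Af[N]x, y\rangle = \langle f[M]Ax, y\rangle$ for arbitrary $x, y$, hence the operator identity $Af[N] = f[M]A$.

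The algebraic and spectral content is essentially routine given Fuglede--Putnam, so the main obstacle is the clean transition from continuous to bounded Borel functions. One must verify that $\mu$ and $\nu$ above are genuine regular complex Borel measures on the compact set $K$ and invoke the complex version of Riesz representation rather than its positive counterpart; alternatively one can argue by a monotone-class / dominated convergence reduction applied to $E_N$ and $E_M$ directly. Either route is standard but worth stating carefully, because the Borel functional calculus is continuous only in the strong operator topology on bounded sets, not in norm.
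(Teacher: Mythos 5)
Your proof is correct and follows exactly the route the paper intends: the paper offers no written proof beyond the remark that the lemma follows easily from the Fuglede--Putnam theorem, and your argument (reduce to the intertwining $AN=MA$ with $A=G^{-1}P$, apply Fuglede--Putnam, pass from polynomials in $z,\bar z$ to $C(\spc(N))$ by Stone--Weierstrass, then to bounded Borel functions via the regular complex measures $\langle E(\cdot)x,y\rangle$) is the standard fleshing-out of that hint. Nothing further is needed.
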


\LEM{F-P} enables us to define extended functional calculus for scalar operators. Let us introduce

\begin{dfn}{ext-s}
For any set $\Omega \subset \CCC$ and a Hilbert space $\HHh$ denote by $\ddD_{\HHh}(\Omega)$ and
$\ssS_{\HHh}(\Omega)$ the sets of all diagonalizable and, respectively, scalar operators on $\HHh$
whose spectra are contained in $\Omega$.\par
Let $f\dd \Omega \to \CCC$ be a Borel function which is bounded on compact subsets of $\Omega$. For
any operator $T \in \ssS_{\HHh}(\Omega)$ we define $f[T]$ as follows: take an invertible operator
$G \in \BBb(\HHh)$ such that $G T G^{-1}$ is normal and put
$$f[T] = G^{-1} f[G T G^{-1}] G.$$
\LEM{F-P} asserts that $f[T]$ is well defined, i.e. it is independent of the choice of $G$ for which
$G T G^{-1}$ is normal.
\end{dfn}

When $g(z) = \bar{z}$, the transform $T \mapsto g[T]$ (where $T$ runs over scalar operators) was
studied by us earlier \cite{pn0}, where we used other notation. For any scalar operator $T$, $g[T]$
was denoted by $T^{(*)}$ and called the \textit{quasi-adjoint} of $T$. (The quasi-adjoint was
involved there to characterize operator algebras similar to commutative $C^*$-algebras.)\par
Our last aim of the paper is to characterize those functions $f$ for which the transform $f_{\op}\dd
\ddD_{\HHh}(\Omega) \ni T \mapsto f[T] \in \BBb(\HHh)$ is continuous on some (or any)
infinite-dimensional Hilbert space $\HHh$. This is included in the next result, where we denote
by $\ell_2$ the classical separable Hilbert space. As usual, $I$ stands for the identity operator
on a suitable Hilbert space.

\begin{pro}{holo}
For a continuous function $f\dd \Omega \to \CCC$ \tfcae
\begin{enumerate}[\upshape(i)]
\item for each $\lambda \in \Omega'$ there exist positive real constants $\epsi = \epsi(\lambda)$
   and $M = M(\lambda)$ such that $\|f[K+\lambda I]\| \leqsl M$ whenever $K \in \BBb(\ell_2)$ is
   a finite-dimensional diagonalizable operator such that $\|K\| \leqsl \epsi$ and
   $\spc(K + \lambda I) \subset \Omega$;
\item $f_{\op}\dd \ddD_{\ell_2}(\Omega) \to \BBb(\ell_2)$ is continuous;
\item for an arbitrary Hilbert space $\HHh$, the function $f_{\op}\dd \ssS_{\HHh}(\Omega) \to
   \BBb(\HHh)$ extends to a continuous function of the set $\{T \in \BBb(\HHh)\dd\ \spc(T) \subset
   D\}$ for some open \textup{(}in $\CCC$\textup{)} set $D$;
\item $f$ extends to a holomorphic function of an open set $D \supset \Omega$.
\end{enumerate}
\end{pro}
\begin{proof}
First assume $f$ extends to a holomorphic function $F\dd D \to \CCC$. It follows from the Riesz
functional calculus that $F$ induces a holomorphic function $F_{\op}\dd U \to \BBb(\HHh)$
on the open (in $\BBb(\HHh)$) set $U := \{T \in \BBb(\HHh)\dd\ \spc(T) \subset D\}$ (consult e.g.
\cite[VII.\S4]{con} or \cite[I.\S7]{b-d}). It is easy to check that $F_{\op}$ extends $f_{\op}\dd
\ssS_{\HHh}(\Omega) \to \BBb(\HHh)$. This shows that (iii) follows from (iv). Since (i) obviously
follows from (ii), and (ii) from (iii), we only need to prove that (iv) is implied by (i). Taking
into account \THM{holo}, it suffices to verify that condition (i) of that result is satisfied. But
that condition easily follows from the condition (i) of the proposition. The details are left
to the reader.
\end{proof}

The above result has a remarkable consequence: the Riesz functional calculus (that is,
the holomorphic functional calculus) is as wide as possible when we require its continuity.

\end{document}